\newtheorem{theorem}{Theorem}
\newtheorem{corollary}[theorem]{Corollary}
\newtheorem{example}[theorem]{Example}
\newtheorem{proposition}[theorem]{Proposition}
\newtheorem{remark}[theorem]{Remark}
\newenvironment{proof}[1][Proof]{\noindent\textbf{#1.} }{\ \rule{0.5em}{0.5em}}
\begin{document}

\title{Strictly invariant submodules}
\author{Simion Breaz, Grigore C\u{a}lug\u{a}reanu and Andrey Chekhlov 
\thanks{%
2010 AMS Subject Classification: 16D10, 16D80,20K27 Key words: strictly
invariant submodule, strongly invariant submodule, Abelian group, strictly
invariant subgroup}}
\maketitle

\begin{abstract}
If $M$ is an $R$-module, we study the submodules $K\leq M$ with the property
that $K$ is invariant with respect to all monomorphisms $K\rightarrow M$.
Such submodules are called \textsl{strictly invariant}. For the case of $%
\mathbb{Z}$-modules (i.e. Abelian groups) we prove that in many situations
these submodules are invariant with respect to all homomorphisms $%
K\rightarrow M$, submodules which were called \textsl{strongly invariant}.
\end{abstract}

\section{Introduction}

Let $K$ be a submodule of a module $M$, and let $\mathcal{X}$ be a class of
homomorphisms such that $f(K)$ makes sense for all $f\in \mathcal{X}$. We
say that $K$ is invariant with respect to the class $\mathcal{X}$ if the
inclusion $f(K)\leq K$ holds for all $f\in \mathcal{X}$. For instance, $K$
is fully invariant, injective invariant, respectively characteristic, if $K$
is invariant with respect to that class $\mathcal{X}$, where $\mathcal{X}$
is $\mathrm{End}(M)$, $\mathrm{Mon}(M)$ (i.e. the set of all monic
endomorphisms of $M$), respectively $\mathrm{Aut}(M)$. In module theory
there are important classes of modules which can be characterized by the
invariance of some submodules with respect to some classes of homomorphisms.
For instance, a module $M$ is quasi-injective (pseudo-injective) if and only
if it is fully invariant (characteristic) as a submodule of the injective
hull of $M$, cf. \cite{Fa64} (respectively \cite{ESS}). We refer to \cite%
{As-et al} for some general statements about modules which are invariant
with respect to classes of endomorphisms of injective hulls.

Injective invariant subgroups of Abelian groups were termed S-characteristic
and left invariant, respectively, in \cite{bae} or \cite{gol}. These were
used in \cite{bre} for the study of (co)hopfian modules.

The submodules $K$ which are invariant with respect to $\mathcal{X}=\mathrm{%
Hom}(K,M)$ are called \textit{strongly invariant}, and these are studied in 
\cite{cal}, with a special attention to the case of Abelian groups. We will
say that the submodule $K$ of $M$ is \textit{strictly invariant} if it is
invariant with respect to the set $\mathcal{X}=\mathrm{Mon}(K,M)$ of all
monomorphisms $K\rightarrow M$. Clearly, strongly invariant submodules are
strictly invariant and strictly invariant submodules are characteristic.

For reader's convenience we mention that the same notions are discussed in
the case of non-Abelian groups in \cite{sw}, where strongly invariant
(normal) subgroups are termed \emph{homomorph containing} and strictly
invariant subgroups are termed \emph{isomorph containing}.

In the next section we study general properties of strictly invariant
submodules. Among these it is proved that the set of all strictly invariant
submodules of a module is a complete lattice with respect to the inclusion
relation, Proposition \ref{lattice}. Moreover, if the additive group of the
module has no elements of order $2$, then every strictly invariant submodule
is invariant with respect to idempotent endomorphisms, Proposition \ref{w2}.

In the third section we study strictly invariant subgroups of Abelian
groups. We mention that in Example \ref{non-strongly} it is proved that
there exist strictly invariant submodules which are not strongly invariant.
However, we were not able to construct such an example for the case of
Abelian groups. Therefore we are focussed on finding conditions (as general
as possible) on the group and/or on the subgroup, which imply that the
strictly invariant subgroups are strongly invariant, in order to argue the
enunciation of the following conjecture: \textit{every strictly invariant
subgroup of an Abelian group is strongly invariant}. Very large classes of
Abelian groups are shown to support this conjecture.

In this context we mention that in the case of Abelian groups, all
pseudo-injective groups are quasi-injective, \cite{js67}. A similar
situation occurred in \cite{bre}: denoting by $\mathcal{Q}(G)$, the family
of all subgroups $N\leq G$ such that every homomorphism $N\longrightarrow G$
extends to an endomorphism of $G$ and by $\mathcal{P}(G)$, the family of all
subgroups $N\leq G$ such that every injective homomorphism $N\longrightarrow
G$ extends to an endomorphism of $G$, though we strongly suspect that $%
\mathcal{Q}(G)=\mathcal{P}(G)$ for Abelian groups, the proof which shows
that \emph{finitely generated subgroups from }$\mathcal{P}(G)$\emph{\ are
also in }$\mathcal{Q}(G)$ was already very hard (and the general question is
still open).

Notice that \emph{for noncommutative groups} it is easy to give examples of
strictly invariant subgroups which are not strongly invariant: the dihedral
2-groups of order at least 8 and the infinite dihedral group. The order 8
group $D_{8}$ has \emph{a unique cyclic maximal subgroup} $H$ (of order 4)
which clearly is strictly but not strongly invariant in $D_{8}$. Indeed,
there are other two order 4 subgroups which are Klein, and all the other
order 2 subgroups are (clearly) cyclic.

We finally mention that, starting from \cite{bel}, Dikranian, Giordano
Bruno, Goldsmith, Salce, Virili and Zanardo defined and studied fully inert
subgroups of Abelian groups in \cite{dar} - \cite{dik2}, \cite{gol1}, \cite%
{gol2}. Replacing fully invariant subgroups by strongly invariant subgroups,
led the first and second authors to study the strongly inert subgroups of
Abelian groups in \cite{bre1}. A natural continuation of all these (kindly
suggested by the referee) would be to study the \emph{strictly inert
subgroups} and compare these with strongly inert subgroups. We postpone this
to a forthcoming paper.

All modules we consider are over a unital ring denoted $R$. $\mathbb{F}_{2}$
denotes the field with two elements and $\mathbb{Z}(2)$ the Abelian group
with two elements. For other notations for Abelian groups we refer to \cite%
{fuc1} and \cite{fuc2}.

\section{General properties}

Using the above definitions we obtain the following chart\noindent 
\begin{equation*}
\begin{array}{ccccc}
\text{\textrm{strongly-invariant}} & \mathrm{\overset{(\ast )}{%
\Longrightarrow }} & \text{\textrm{fully-invariant}} &  &  \\ 
\mathrm{\Downarrow (4)} &  & \mathrm{\Downarrow (1)} &  &  \\ 
\text{\textrm{strictly-invariant}} & \mathrm{\overset{(1)}{\Longrightarrow }}
& \text{\textrm{injective-invariant}} & \mathrm{\overset{(3)}{%
\Longrightarrow }} & \text{\textrm{characteristic}}%
\end{array}%
\end{equation*}

The following examples (the numbering corresponds to these) show that all
reversed implications fail ((2) presents fully invariant subgroup which is
not strictly invariant; as for (*), such examples are given in \cite{cal}).

First, \emph{an injective invariant subgroup which is not strictly invariant}%
.

\begin{example}
{\rm Let $G=\left\langle a_{1}\right\rangle \oplus \left\langle
a_{2}\right\rangle \oplus \left\langle a_{3}\right\rangle $\ with $%
o(a_{i})=2^{i}$, and $H=\langle 2a_{2}\rangle \oplus \langle
a_{1}+2a_{3}\rangle $. Since $G$\ is finite, characteristic and injective
invariant subgroups coincide (because injective functions from $G$\ to $G$\
are bijective). It is proved in \cite[p. 9]{fuc2} that $H$\ is
characteristic, and it is easy to see that $H$\ is not strictly invariant
(e.g. take $2a_{2}\longmapsto a_{1}$\ and $a_{1}+2a_{3}\longmapsto a_{2}$).
Moreover, it is not fully invariant. }
\end{example}

\begin{example}
{\rm If $p$\ is a prime, the subgroup $p\mathbb{Z}$\ of $\mathbb{Z}$\ is
not strictly invariant but it is fully invariant. }
\end{example}

Other examples may be found in \cite{cal} or \cite{che1}.

\bigskip

Next, a \emph{characteristic subgroup which is not injective invariant.}

\begin{example}
{\rm By \cite[Theorem 2.14]{Ar}, for every prime $p$\ there exists a
torsion free Abelian group $G$\ of rank $2$\ with endomorphism ring
isomorphic to $R=\mathbb{Z}[\sqrt{-p}]$. Since the units of $R$\ are $\pm 1$%
, it follows that all subgroups are characteristic. Moreover, $\mathbb{Q}%
\otimes R$\ is a division ring, hence all non-zero endomorphisms of $G$\ are
injective. Let $x\in G$\ be a non-zero element, and let $f$\ and $g$\ be two
endomorphisms of $G$\ which are $\mathbb{Q}$-independent in $\mathbb{Q}%
\otimes R$. Suppose $f(x)$\ and $g(x)$\ are not $\mathbb{Z}$-independent.
Then there exist two non-zero integers $m$\ and $n$\ such that $mf(x)=ng(x)$%
\ and so $mf-ng$\ is not injective. Hence $mf=ng$, a contradiction. Thus for
every non-zero element $x$\ of $G$, the subgroup $Rx$\ has to be of rank $2$%
, hence the subgroup $\langle x\rangle $\ is not injective invariant. }
\end{example}


Next, we present an example of \emph{strictly invariant submodule which is
not strongly invariant}.

\begin{example}
\label{non-strongly} {\rm Let $R$ be a ring such that there exist
non-isomorphic simple modules $S_{1}$, $S_{2}$ and $T$ such that }

\textbf{1. } {\rm the endomorphism rings of these modules are isomorphic
to $\mathbb{Z}_{2}$; }

\textbf{2.} {\rm there are non-splitting exact sequences 
\begin{equation*}
0\rightarrow S_{1}\rightarrow K\rightarrow T\rightarrow 0,\text{ and }%
0\rightarrow S_{2}\rightarrow K\rightarrow T\rightarrow 0.
\end{equation*}%
By using the pullback diagram 
\begin{equation*}
\xymatrix{ & & 0\ar[d] & 0\ar[d] & \\ & & S_2\ar[d]\ar@{=}[r] & S_2\ar[d] &
\\ 0\ar[r] & S_1\ar[r]\ar@{=}[d] & M\ar[r]^{\psi}\ar[d]^{\varphi} &
L\ar[r]\ar[d] & 0\\ 0\ar[r] & S_1\ar[r] & K\ar[r]\ar[d] & T\ar[r]\ar[d] &
0\\ & & 0 & 0 & }
\end{equation*}%
we construct a module $M$ such that the set of its submodules is $%
\{0,S_{1},S_{2},S_{1}\oplus S_{2},M\}$ and $M/S_{1}\oplus S_{2}\cong T$
(this module is also used in \cite[Lemma 2]{js}). }

{\rm Let $\varphi :M\rightarrow K$ be a non-zero homomorphism. Then $%
\varphi (S_{2})=0$. If $\varphi (S_{1})=0$ then $\varphi $ induces a
non-zero homomorphism $T\rightarrow M$, which is impossible. We obtain that $%
\varphi (S_{1})\neq 0$, and it follows that $\varphi |_{S_{1}}$ is the
inclusion map. Therefore, if $\varphi _{1},\varphi _{2}:M\rightarrow K$ are
two non-zero homomorphisms then the restriction of these homomorphisms to $%
S_{1}\oplus S_{2}$ coincide. It follows that $(\varphi _{1}-\varphi
_{2})(S_{1}\oplus S_{2})=0$, hence $\varphi _{1}=\varphi _{2}$. This way $%
\mathrm{Hom}(M,K)=\{0,\varphi \}$ and in the same way we obtain $\mathrm{Hom}%
(M,L)=\{0,\mathrm{\psi }\}$. }

{\rm It is easy to see that if $\rho :M\rightarrow K\times L$ is the
homomorphism induced by $\varphi $ and $\psi $ then $\rho $ is a
monomorphism. Since $\mathrm{Hom}(M,K\times L)\cong \mathrm{Hom}(M,K)\times 
\mathrm{Hom}(M,L)$, it follows that $\rho $ is the only monomorphism from $M$
into $K\times L$. We conclude that $\rho (M)$ is strictly invariant. Since
there exists an epimorphism M}$\mathrm{\rightarrow }$ {\rm K, and $%
K\times 0$ is not contained in $\rho (M)$, it follows that $\rho (M)$ is not
strongly invariant. }
\end{example}

\bigskip

For reader's convenience we recall the concrete example described in \cite[%
Example 3.1]{As-et al}.

Let $R=\left( 
\begin{array}{ccc}
\mathbb{F}_{2} & \mathbb{F}_{2} & \mathbb{F}_{2} \\ 
0 & \mathbb{F}_{2} & 0 \\ 
0 & 0 & \mathbb{F}_{2}%
\end{array}%
\right) $. Then the right $R$-module $M=\left( 
\begin{array}{ccc}
\mathbb{F}_{2} & \mathbb{F}_{2} & \mathbb{F}_{2} \\ 
0 & 0 & 0 \\ 
0 & 0 & 0%
\end{array}%
\right) $ satisfies the required conditions: the simple submodules are $%
S_{1}=\left( 
\begin{array}{ccc}
0 & \mathbb{F}_{2} & 0 \\ 
0 & 0 & 0 \\ 
0 & 0 & 0%
\end{array}%
\right) $ and $S_{2}=\left( 
\begin{array}{ccc}
0 & 0 & \mathbb{F}_{2} \\ 
0 & 0 & 0 \\ 
0 & 0 & 0%
\end{array}%
\right) ,$ and it is easy to see that the simple R-module $M/(S_{1}\oplus
S_{2})$ is not isomorphic to $S_{1}$ or $S_{2}$.

\bigskip

In what follows we study the basic properties of strictly invariant
submodules. First observe that \emph{strict invariance is not a transitive
property.}

\begin{example}
\label{non-transitive} {\rm In order to show this, we observe that if $S$
and $T$ are non-isomorphic simple modules, $0\rightarrow S\rightarrow
K\rightarrow T\rightarrow 0$ is a non-splitting exact sequence and $%
M=K\oplus S$, then $S\oplus 0$ is strictly invariant in $K\oplus 0$, $%
K\oplus 0$ is strictly invariant in $M$, but $S\oplus 0$ is not strictly
invariant in $M$. }

{\rm For the case of Abelian groups, consider $G=H\oplus L=\mathbb{Z}%
(2^{\infty })\oplus \mathbb{Z}(2)$ with $K=\mathbb{Z}(2)<H$. Then $K=S(H)$,
the socle, is strongly and so strictly invariant in $H$. It is not strictly
invariant in $G$, since the composition of the isomorphism $K\cong L$ with
the injection $\iota _{L}:L\longrightarrow G$ does not map $K$ into $K$.
Finally, $H$ is a fully invariant direct summand - as divisible part of $G$
- and so strongly and strictly invariant in $G$. }
\end{example}

Next, \emph{if }$H\leq L\leq M$\emph{\ and }$H$\emph{\ is strictly invariant
in }$M$\emph{\ then }$L$\emph{\ might not be strictly invariant in }$M$.

\begin{example}
{\rm It suffices to take $K$ as in Example \ref{non-transitive}, $%
M=K\oplus K$, $H=S\oplus S$ and $K=K\oplus S$. }
\end{example}

\begin{proposition}
Let $M$ be a module and let $H\leq K$ be submodules of $M$. If $H$ is strictly
invariant in $M$ and $K/H$ is strongly invariant in $M/H$, then $K$ is
strictly invariant in $M$.
\end{proposition}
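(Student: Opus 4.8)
The plan is to take an arbitrary monomorphism $f \in \mathrm{Mon}(K,M)$ and show that $f(K) \le K$. First I would examine how $f$ interacts with the submodule $H$. Since $H$ is strictly invariant in $M$, I would like to exploit this, but $f$ is defined only on $K$, not on all of $M$; the natural move is to look at the restriction $f|_H \colon H \to M$. This restriction is again a monomorphism (being the restriction of an injective map), so $f|_H \in \mathrm{Mon}(H,M)$, and strict invariance of $H$ in $M$ gives immediately that $f(H) = f|_H(H) \le H$.

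Consequently $f$ descends to a well-defined induced map $\bar f \colon K/H \to M/H$ sending $x + H \mapsto f(x) + H$; this is well-defined precisely because $f(H) \le H$. The next step is to understand what kind of map $\bar f$ is. It is certainly a homomorphism, so it lies in $\mathrm{Hom}(K/H, M/H)$, and since $K/H$ is assumed strongly invariant in $M/H$ — that is, invariant with respect to \emph{all} homomorphisms $K/H \to M/H$, not merely the monomorphisms — we conclude that $\bar f(K/H) \le K/H$. Unwinding this, for every $x \in K$ we have $f(x) + H \in K/H$, which says exactly that $f(x) \in K$. Hence $f(K) \le K$, as desired.

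I expect the proof to go through cleanly because the hypotheses are tuned precisely to the two stages: strict invariance of $H$ handles the \emph{injective} map $f|_H$ at the bottom level, while the \emph{strong} invariance of $K/H$ at the quotient level is needed because the induced map $\bar f$ need not be a monomorphism even though $f$ is (injectivity of $f$ on $K$ does not force injectivity of $\bar f$ on $K/H$, since elements of $K \setminus H$ could map into $H$). This is the one point I would watch carefully: the asymmetry in the hypotheses (strictly invariant $H$ versus strongly invariant $K/H$) is not an accident but reflects exactly the fact that passing to the quotient can destroy injectivity, so the quotient hypothesis must cover all homomorphisms. Beyond that observation, the argument is a routine diagram chase, and no serious obstacle should arise.
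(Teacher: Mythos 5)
Your proof is correct and follows exactly the same route as the paper's: restrict $f$ to $H$, use strict invariance of $H$ to get $f(H)\le H$ so that the induced map $\bar f\colon K/H\to M/H$ is well-defined, then apply strong invariance of $K/H$ to conclude $f(K)\le K$. Your closing remark about why strong (not merely strict) invariance of $K/H$ is needed — the induced map may fail to be injective — is precisely the point the paper makes right after the proposition, illustrated there by a counterexample.
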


\begin{proof}
Let $f:K\longrightarrow M$ be an injective homomorphism. Since $H$ is
strictly invariant in $M$, the map $\widetilde{f}:K/H\longrightarrow M/H$, $%
\widetilde{f}(k+H)=f(k)+H$ is well-defined and a homomorphism. Since $K/H$
is strongly invariant in $G/H$, $\widetilde{f}(K/H)\subseteq K/H$ which
shows that $f(K)\subseteq K$.
\end{proof}

Notice that we cannot weaken the hypothesis \textquotedblleft $K/H$ is
strongly invariant in $M/H$\textquotedblright\ only to strictly invariant,
as the example below shows .

\bigskip

\emph{If }$H\leq K\leq M$\emph{\ and }$K$\emph{\ is strictly invariant in }$%
M $\emph{\ then }$K/H$\emph{\ might not be strictly invariant in }$M/H$\emph{%
.}

\begin{example}
{\rm For instance, if $M=H\oplus K=\mathbb{Z}_{2}\oplus \mathbb{Z}_{4}$,
the socle $H+2K$ is strictly invariant in $M$ but $(H+2K)/2K=\mathbb{Z}_{2}$
is not strictly invariant in $M/2K=\mathbb{Z}_{2}\oplus \mathbb{Z}_{2}$. }
\end{example}

Further, \emph{the intersection of a family of strictly invariant submodules
is not (in general) strictly invariant.}

\begin{example}
\label{intersection} {\rm In order to prove this, we use the same module
as in Example \ref{non-transitive}. It is easy to see that the socle $%
S\oplus S$ of $M$ and $K$ are strictly invariant submodules of $M$, but $%
S\oplus 0=(S\oplus S)\cap K$ is not strictly invariant. }

{\rm For the case of Abelian groups we can consider $G=D\oplus R$, where $%
D$ is a divisible $p$-group and $R$ is a reduced $p$-group. Then $D$ and $%
G[p]=D[p]\oplus R[p]$ are strongly invariant subgroups; however, the
subgroup $D\cap G[p]=D[p]$ is not strictly invariant in $G$ (this covers the
missing example in \cite{cal}, where an example of two strongly invariant
subgroups with not strongly invariant intersection was not given). }

{\rm Intersections of strictly invariant subgroups may not be strictly
invariant also in torsion-free groups. To see this we use Example 2 (p. 107, 
\cite{che1}). We recall some details about this example. }

{\rm Let $E_{1}$, $E_{2}$, $E_{3}$ and $E_{4}$ be torsion-free groups of
rank 1, let $p$, $q$, $p_{2}$ and $p_{3}$ be distinct primes, let the types
of the groups $E_{1}$, $E_{2}$, and $E_{3}$ be pairwise incomparable, and
let $E_{1}\cong E_{4}$, $p_{2}E_{2}=E_{2}$, $p_{3}E_{3}=E_{3}$, $pE_{1}\neq
E_{1}$, $pE_{2}\neq E_{2}$, $pE_{3}\neq E_{3}$, $p_{2}E_{1}\neq E_{1}$, $%
p_{2}E_{3}\neq E_{3}$, $p_{3}E_{1}\neq E_{1}$, $p_{3}E_{2}\neq E_{2}$, $%
qE_{1}\neq E_{1}$, $qE_{2}\neq E_{2}$, $qE_{3}\neq E_{3}$. }

{\rm A group $G$ is constructed as subgroup of a divisible torsion-free
group, using a vector space over the field of rational numbers. Write $%
A=\left\langle E_{1},E_{2},p^{-\infty }(e_{1}+e_{2})\right\rangle $, $%
B=\left\langle E_{3},E_{4},q^{-\infty }(e_{3}+e_{4})\right\rangle $ and $%
G=A\oplus B$ where $0\neq e_{i}\in E_{i}$, $i\in \{1,2,3,4\}$ and $%
p^{-\infty }a$ is the infinite set $p^{-1}a$, $p^{-2}a$,... If $\mathbf{t}%
(E_{i})$ denotes the type of $E_{i}$, it is shown that $A$ and $E_{1}\oplus
E_{4}=G(\mathbf{t}(E_{1}))$ are strongly invariant in $G$ but $E_{1}=A\cap G(%
\mathbf{t}(E_{1}))\cong E_{4}$ is not strictly invariant. }
\end{example}

\bigskip

In the sequel we prove some basic properties of strictly invariant
submodules. We denote by $\mathcal{T}(M)$ the set of all strictly invariant
submodules of $M$.

\begin{proposition}
\label{lattice} Let $M$ be an $R$-module. If $\{S_{i}\}_{i\in I}$ is a
family of submodules from $\mathcal{T}(M)$ then $\sum_{i\in I}S_{i}\in 
\mathcal{T}(M)$. Consequently, $(\mathcal{T}(M),\subseteq )$ is complete
lattice.
\end{proposition}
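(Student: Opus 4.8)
The plan is to verify directly that $\sum_{i\in I}S_{i}$ is strictly invariant, and then to read off the lattice assertion from a standard order-theoretic fact. Write $S=\sum_{i\in I}S_{i}$ and let $f\colon S\rightarrow M$ be an arbitrary monomorphism. First I would observe that for each $i$ the restriction $f|_{S_{i}}\colon S_{i}\rightarrow M$ is again injective, hence a monomorphism, so that the hypothesis $S_{i}\in\mathcal{T}(M)$ yields $f(S_{i})\subseteq S_{i}\subseteq S$. Since every element of $S$ is a finite sum of elements drawn from the various $S_{i}$ and $f$ is additive, it follows that $f(S)=\sum_{i\in I}f(S_{i})\subseteq S$. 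This shows $S\in\mathcal{T}(M)$. The one point deserving a moment's care is the fact used here that the restriction of a monomorphism to a submodule is still a monomorphism; this is exactly what lets the hypothesis on each $S_{i}$ transfer to the sum, and it is the sole place where strict (rather than merely characteristic) invariance is used.

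For the completeness statement I would argue as follows. The poset $(\mathcal{T}(M),\subseteq)$ has least element $0$ and greatest element $M$, both trivially strictly invariant. Given any family $\{S_{i}\}_{i\in I}$ in $\mathcal{T}(M)$, the submodule $\sum_{i\in I}S_{i}$ just shown to lie in $\mathcal{T}(M)$ is the smallest submodule of $M$ containing every $S_{i}$; consequently it is also the smallest strictly invariant submodule containing every $S_{i}$, that is, it is the supremum of $\{S_{i}\}_{i\in I}$ in $\mathcal{T}(M)$. Thus every family in $\mathcal{T}(M)$ admits a supremum.

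Finally I would invoke the standard fact that a partially ordered set in which every subset has a supremum is automatically a complete lattice: the infimum of a family $\{S_{i}\}_{i\in I}$ is recovered as the supremum of the set of its lower bounds in $\mathcal{T}(M)$, which is nonempty since it contains $0$. Hence $(\mathcal{T}(M),\subseteq)$ is a complete lattice. I expect no genuine obstacle in this last step; the whole argument is routine once the restriction-of-a-monomorphism observation of the first paragraph is in place.
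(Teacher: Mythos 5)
Your proof is correct and follows essentially the same route as the paper: restricting the monomorphism $f$ to each $S_{i}$ (the paper phrases this as composing with the inclusions $\iota_{i}$) to get $f(S_{i})\subseteq S_{i}$, summing up, and then invoking the standard order-theoretic fact that a poset in which every subset has a supremum is a complete lattice. Your version is only slightly more explicit about the bottom/top elements and the recovery of infima as suprema of lower bounds.
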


\begin{proof}
Let $\{S_{i}\}_{i\in I}$ be a family of strictly invariant submodules of a
module $M$ and let $f:\sum\limits_{i\in I}S_{i}\longrightarrow M$ be an
injective homomorphism. Denoting by $\iota _{i}:S_{i}\longrightarrow
\sum\limits_{i\in I}S_{i}$ ($i\in I$) the inclusions, the compositions $%
f\circ \iota _{i}:S_{i}\longrightarrow G$ are also injective. By hypothesis, 
$(f\circ \iota _{i})(S_{i})\subseteq S_{i}$ and so $f(\sum\limits_{i\in
I}S_{i})\subseteq \sum\limits_{i\in I}S_{i}$, as required.

The existence of inf's now follows because an ordered set $A$ is a complete
lattice if and only if for every subset $B\subseteq A$, there exists $\sup B$%
.
\end{proof}

\medskip

Using Example \ref{intersection}, we observe that in general the infimum of
a family $\{S_{i}\}_{i\in I}$ from $\mathcal{T}(M)$ is not the intersection
of these submodules, that is, the complete lattice $(\mathcal{T}%
(M),\subseteq )$ above is not a complete sublattice of the lattice of all
submodules of $M$.

\bigskip

Let $M$ be an $R$-module. If $K\leq M$, we denote by $\mathcal{M}_{M}(K)$
the sum of all submodules $f(K)$, where $f$ ranges all monomorphisms $%
f:K\rightarrow M$. We denote by $\mathcal{S}(M)$ the lattice of all
submodules of $M$ and $\mathcal{M}_{M}(\mathcal{S}(M))=\{\mathcal{M}%
_{M}(K):K\leq M\}$.

\begin{proposition}
Let $M$ be an $R$-module. Then $\mathcal{M}_{M}(-):\mathcal{S}(M)\rightarrow 
\mathcal{S}(M)$ is an idempotent decreasing operator, and $\mathcal{M}_{M}(%
\mathcal{S}(M))=\mathcal{T}(M)$.
\end{proposition}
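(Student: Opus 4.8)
The plan is to derive everything from a single fact: that $\mathcal{M}_{M}(K)$ is itself strictly invariant for every $K\leq M$. To prove this I would take an arbitrary monomorphism $g\colon\mathcal{M}_{M}(K)\rightarrow M$ and exploit that $\mathcal{M}_{M}(K)=\sum_{f}f(K)$, with $f$ ranging over $\mathrm{Mon}(K,M)$. Since a homomorphism carries a sum of submodules to the sum of their images, $g(\mathcal{M}_{M}(K))=\sum_{f}g(f(K))$. Each defining map $f$ has image inside $\mathcal{M}_{M}(K)$, so it corestricts to a monomorphism $K\rightarrow\mathcal{M}_{M}(K)$, and therefore $g\circ f\colon K\rightarrow M$ is again a monomorphism. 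By the very definition of $\mathcal{M}_{M}(K)$ this gives $g(f(K))=(g\circ f)(K)\subseteq\mathcal{M}_{M}(K)$, and summing over $f$ yields $g(\mathcal{M}_{M}(K))\subseteq\mathcal{M}_{M}(K)$; hence $\mathcal{M}_{M}(K)\in\mathcal{T}(M)$. This closure-under-composition step is the only nonformal part of the argument, and it is exactly the point where strict invariance is the right notion, because monomorphisms (unlike, say, idempotents) are closed under composition.

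Next I would pin down the comparison between $K$ and $\mathcal{M}_{M}(K)$ and deduce idempotency. Because the inclusion $K\hookrightarrow M$ is a monomorphism, $K$ occurs among the summands $f(K)$, which forces $K\subseteq\mathcal{M}_{M}(K)$; thus the operator is expanding, and this is the direction in which the comparison must be read rather than $\mathcal{M}_{M}(K)\subseteq K$. (The containment can be strict: in Example~\ref{non-transitive} the copy $S\oplus 0$ of the simple summand is not strictly invariant in $M$, so $\mathcal{M}_{M}(S\oplus 0)\supsetneq S\oplus 0$.) Idempotency is then immediate: applying expansiveness to $\mathcal{M}_{M}(K)$ gives $\mathcal{M}_{M}(K)\subseteq\mathcal{M}_{M}(\mathcal{M}_{M}(K))$, while the first step shows $\mathcal{M}_{M}(K)\in\mathcal{T}(M)$, so every monomorphism out of it maps it into itself and $\mathcal{M}_{M}(\mathcal{M}_{M}(K))\subseteq\mathcal{M}_{M}(K)$; hence $\mathcal{M}_{M}\circ\mathcal{M}_{M}=\mathcal{M}_{M}$.

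Finally I would prove $\mathcal{M}_{M}(\mathcal{S}(M))=\mathcal{T}(M)$ by two inclusions that are now in hand. The inclusion $\mathcal{M}_{M}(\mathcal{S}(M))\subseteq\mathcal{T}(M)$ is precisely the first step. For the reverse inclusion, let $N\in\mathcal{T}(M)$; then $f(N)\subseteq N$ for every $f\in\mathrm{Mon}(N,M)$, so $\mathcal{M}_{M}(N)=\sum_{f}f(N)\subseteq N$, and combined with the expansiveness $N\subseteq\mathcal{M}_{M}(N)$ this gives $\mathcal{M}_{M}(N)=N$, whence $N=\mathcal{M}_{M}(N)\in\mathcal{M}_{M}(\mathcal{S}(M))$. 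I expect the only genuine obstacle to be the composition-of-monomorphisms step of the first paragraph; the comparison, idempotency, and image identification are then bookkeeping, with the one subtlety being the direction of the comparison, which the inclusion map settles as $K\subseteq\mathcal{M}_{M}(K)$, with equality holding exactly on $\mathcal{T}(M)$ (so the operator fixes, and never strictly shrinks, the strictly invariant submodules).
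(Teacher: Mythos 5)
Your proof is correct and follows essentially the same route as the paper: the one genuine step is identical (for a monomorphism $f\colon \mathcal{M}_{M}(K)\rightarrow M$ and each defining monomorphism $g\colon K\rightarrow M$, the composite $f\circ g$ is again a monomorphism, so $(f\circ g)(K)\leq \mathcal{M}_{M}(K)$, giving $\mathcal{M}_{M}(K)\in \mathcal{T}(M)$), and surjectivity onto $\mathcal{T}(M)$ follows, as in the paper, because $K\in \mathcal{T}(M)$ forces $\mathcal{M}_{M}(K)=K$. You are also right to flag the direction of the comparison: since the inclusion $K\hookrightarrow M$ is a monomorphism one gets $K\subseteq \mathcal{M}_{M}(K)$ (with equality exactly when $K\in \mathcal{T}(M)$, and strict containment otherwise), so the operator is extensive rather than \emph{decreasing} as the statement literally says --- the paper's own proof never establishes, and could not establish, a decreasing property, and your explicit deduction of idempotency from extensivity together with strict invariance of the image fills in a detail the paper leaves implicit.
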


\begin{proof}
If $f\colon \mathcal{M}_{M}(K)\rightarrow M$ is an injective homomorphism,
then for every monomorphism $g:K\rightarrow M$, since $g(K)\leq \mathcal{M}%
_{M}(K)\leq M$, we can consider $f\circ g:K\rightarrow M$, which is also a
monomorphism. Then $(f\circ g)(K)\leq \mathcal{M}_{M}(K)$, and we conclude
that $f(\mathcal{M}_{M}(K))\leq \mathcal{M}_{M}(K)$. Finally for every
submodule $K$ of $M$ we have $\mathcal{M}_{M}(K)\in \mathcal{T}(M)$ and the
surjectivity follows from the fact that $K\in \mathcal{T}(M)$ clearly
implies $\mathcal{M}_{M}(K)=K$.
\end{proof}

\begin{corollary}
Let $M$ be an $R$-module and $K\leq M$.

\begin{enumerate}
\item If $f:K\rightarrow M$ is a homomorphism and $f(K)\nsubseteq \mathcal{M}%
_{M}(K)$ then for every $\alpha :K\rightarrow \mathcal{M}_{M}(K)$ there
exists $x\in K$ such that $f(x)=\alpha (x)$.

\item $f(K)\cap \mathcal{M}_{M}(K)\neq 0$ for every $0\neq f\in \mathrm{Hom}%
\,(K,G)$.

\item If $H\leq M$ is a submodule of $M$ such that $H\cap \mathcal{M}%
_{M}(K)=0$, then $\mathrm{Hom}\,(H,K)=0$.
\end{enumerate}
\end{corollary}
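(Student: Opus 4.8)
The three parts all rest on two facts recorded in the preceding proposition: $\mathcal{M}_M(K)\in\mathcal{T}(M)$ (so $\mathcal{M}_M(K)$ is strictly invariant), and $K\subseteq\mathcal{M}_M(K)$, since the inclusion $\mathrm{id}\colon K\hookrightarrow M$ is a monomorphism and hence $K$ is one of the images summed to form $\mathcal{M}_M(K)$. I will read $G=M$ throughout, as in the surrounding propositions.

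For part (1) the plan is a contradiction argument, the substantive claim being that $f$ and $\alpha$ agree at some \emph{nonzero} $x$ (at $x=0$ they agree trivially). Suppose instead that $f(x)\neq\alpha(x)$ for every $x\neq 0$. Then $g:=f-\alpha\colon K\to M$ is well defined (as $\alpha(K)\subseteq\mathcal{M}_M(K)\subseteq M$) and has trivial kernel, hence is a monomorphism, so $g(K)\subseteq\mathcal{M}_M(K)$ by the definition of $\mathcal{M}_M(K)$. Since also $\alpha(K)\subseteq\mathcal{M}_M(K)$ by hypothesis, $f(K)=(g+\alpha)(K)\subseteq\mathcal{M}_M(K)$, contradicting $f(K)\nsubseteq\mathcal{M}_M(K)$. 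This forces a nonzero $x$ with $f(x)=\alpha(x)$.

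For part (2) I would distinguish two cases according to whether $f(K)$ lies in $\mathcal{M}_M(K)$. If $f(K)\subseteq\mathcal{M}_M(K)$, then $f(K)\cap\mathcal{M}_M(K)=f(K)\neq 0$ because $f\neq 0$. If $f(K)\nsubseteq\mathcal{M}_M(K)$, I apply part (1) with $\alpha$ the inclusion $K\hookrightarrow\mathcal{M}_M(K)$ (available since $K\subseteq\mathcal{M}_M(K)$): it produces a nonzero $x\in K$ with $f(x)=\alpha(x)=x$, whence $0\neq f(x)=x\in f(K)\cap\mathcal{M}_M(K)$. In either case the intersection is nonzero.

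For part (3) the approach is again contradiction through part (2), and here I would signal in advance the step I expect to be the main obstacle: parts (1)--(2) constrain only homomorphisms \emph{out of} $K$, while the displayed conclusion $\mathrm{Hom}(H,K)=0$ concerns maps \emph{into} $K$, which the operator $\mathcal{M}_M(K)$ cannot detect. This obstruction is genuine: taking $M=\mathbb{Z}(p^\infty)\oplus\mathbb{Z}(p)$, $K=\mathbb{Z}(p^\infty)\oplus 0$ and $H=0\oplus\mathbb{Z}(p)$, every monomorphism $K\to M$ has image in the maximal divisible subgroup $\mathbb{Z}(p^\infty)\oplus 0$, so $\mathcal{M}_M(K)=K$ and $H\cap\mathcal{M}_M(K)=0$, yet $\mathrm{Hom}(H,K)\neq 0$. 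The vanishing that the hypothesis $H\cap\mathcal{M}_M(K)=0$ does deliver, and the statement I would prove, concerns maps from $K$ into $H$: given $g\in\mathrm{Hom}(K,H)$, compose with $H\hookrightarrow M$ to regard $g\in\mathrm{Hom}(K,M)$; if $g\neq 0$ then part (2) gives $0\neq g(K)\cap\mathcal{M}_M(K)\subseteq H\cap\mathcal{M}_M(K)=0$, a contradiction, so $g=0$. Thus the form of part (3) consistent with its hypothesis and with parts (1)--(2) is $\mathrm{Hom}(K,H)=0$, and this is the assertion I would carry through.
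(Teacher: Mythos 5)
Your handling of parts (1) and (2) is exactly the paper's argument, just written out in full: the paper's entire proof reads that the image of $f-\alpha$ is not contained in $\mathcal{M}_M(K)$, hence $f-\alpha$ is not a monomorphism (part 1), and that parts 2 and 3 follow by taking $\alpha$ to be the inclusion map. Your contrapositive phrasing of (1), your remark that the conclusion must be read with $x\neq 0$ (which is what the non-injectivity of $f-\alpha$ actually yields, and what (2) requires), your explicit case $f(K)\subseteq \mathcal{M}_M(K)$ in (2) (which the paper's terse ``apply 1'' silently skips, since (1) needs $f(K)\nsubseteq \mathcal{M}_M(K)$), and your reading $G=M$ are all faithful to, and slightly more careful than, the printed proof.

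On part (3) you have caught a genuine misprint rather than introduced a deviation. Your counterexample is valid: for $M=\mathbb{Z}(p^\infty)\oplus\mathbb{Z}(p)$, $K=\mathbb{Z}(p^\infty)\oplus 0$ and $H=0\oplus\mathbb{Z}(p)$, every monomorphic image of $K$ is divisible and so lies in $D(M)=K$, whence $\mathcal{M}_M(K)=K$ and $H\cap\mathcal{M}_M(K)=0$, while $\mathrm{Hom}(H,K)\cong\mathbb{Z}(p)\neq 0$; so the printed $\mathrm{Hom}(H,K)=0$ is false as stated. The paper's own one-line proof (``apply 1 taking for $\alpha$ the inclusion map'', i.e.\ part (2)) proves precisely your corrected assertion $\mathrm{Hom}(K,H)=0$: a nonzero $f\in\mathrm{Hom}(K,H)$ would give $0\neq f(K)\cap\mathcal{M}_M(K)\leq H\cap\mathcal{M}_M(K)=0$. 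Your correction is also the exact analogue of Corollary \ref{ch}(3), where the homomorphisms likewise go \emph{out of} the distinguished submodule, not into it. In short: same method as the paper throughout, with the statement of (3) repaired (the roles of $H$ and $K$ transposed) in the only way compatible with the paper's proof.
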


\begin{proof}
1. The image of $f-\alpha $ is not contained in $\mathcal{M}_{M}(K)$. Then $%
f-\alpha $ is not a monomorphism.

For 2 and 3 we apply 1 taking for $\alpha $ the inclusion map.
\end{proof}

\begin{corollary}
\label{ch} Let $H$ be a strictly invariant submodule of $M$. Then:

\begin{enumerate}
\item $f(H)\subseteq H$ for every non-zero homomorphism $f:H\rightarrow G$
such that $f(f(H)\cap H)=0$.

\item $f(H)\cap H\neq 0$ for every $0\neq f\in \mathrm{Hom}\,(H,M)$.

\item $\mathrm{Hom}(H,L)=0$ for every $L\leq M$ such that $L\cap H=0$.
\end{enumerate}
\end{corollary}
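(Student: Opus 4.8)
The three assertions form a cascade in which part~1 carries all the weight and parts~2 and~3 fall out as quick corollaries. The plan for part~1 is to manufacture a monomorphism out of $f$ by adding the inclusion. Writing $\iota\colon H\to M$ for the inclusion, I would set $g=\iota+f\colon H\to M$, so that $g(x)=x+f(x)$. The claim is that the hypothesis $f(f(H)\cap H)=0$ is exactly what makes $g$ injective: if $g(x)=0$ then $x=-f(x)$ lies simultaneously in $f(H)$ and in $H$, hence in $f(H)\cap H$, so the hypothesis gives $f(x)=0$ and therefore $x=-f(x)=0$. Once $g$ is a monomorphism, strict invariance of $H$ yields $g(H)\subseteq H$; subtracting the inclusion, for each $x\in H$ we get $f(x)=g(x)-x\in H$, whence $f(H)\subseteq H$. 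Equivalently, one may invoke part~1 of the preceding corollary with $K=H$ (for which $\mathcal{M}_M(H)=H$ by strict invariance), applied to $\alpha=-\iota$: a nonzero coincidence $f(x)=-x$ would again force $x\in f(H)\cap H$ and then $x=0$, a contradiction.

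For part~2 I would simply specialize part~1. If some $0\neq f\in\mathrm{Hom}(H,M)$ had $f(H)\cap H=0$, then $f(f(H)\cap H)=f(0)=0$, so part~1 applies and gives $f(H)\subseteq H$; but $f(H)\subseteq H$ together with $f(H)\cap H=0$ forces $f(H)=0$, contradicting $f\neq0$. Hence $f(H)\cap H\neq0$. (This also matches part~2 of the preceding corollary once one substitutes $\mathcal{M}_M(H)=H$.)

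Part~3 then reduces to part~2. Given $L\leq M$ with $L\cap H=0$ and a homomorphism $f\colon H\to L$, I would regard $f$ as a map $H\to M$ with image contained in $L$. Then $f(H)\cap H\subseteq L\cap H=0$, so part~2 forces $f=0$; thus $\mathrm{Hom}(H,L)=0$.

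The only real obstacle is in part~1: recognizing that $\iota+f$ is the right auxiliary monomorphism, and checking that the somewhat technical hypothesis $f(f(H)\cap H)=0$ is precisely the condition guaranteeing $\ker(\iota+f)=0$. Everything after that is bookkeeping: strict invariance converts the monomorphism $\iota+f$ into the inclusion $g(H)\subseteq H$, and parts~2 and~3 are routine specializations of part~1 (respectively of part~2).
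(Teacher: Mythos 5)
Your proof is correct and takes essentially the same route as the paper: the paper likewise forms $\overline{f}(h)=h+f(h)$, uses the hypothesis $f(f(H)\cap H)=0$ to show $\overline{f}$ is a monomorphism, applies strict invariance, and then derives parts 2 and 3 as consequences of part 1. You merely spell out the specializations (and the subtraction step $f(x)=g(x)-x$) that the paper leaves implicit.
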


\begin{proof}
1. Suppose there exists $f:H\rightarrow G$, $f\neq 0$, such that $f(f(H)\cap
H)=0$ and consider $\overline{f}:H\rightarrow G$, $\overline{f}(h)=h+f(h)$
for every $h\in H$. If $h+f(h)=0$ then $h\in f(H)\cap H$, hence $f(h)=0$ and
so $h=0$. Therefore, $\overline{f}$ is a monomorphism and $f(H)\subseteq H$
by strictly invariance.

The statements 2 and 3 are consequences of 1.
\end{proof}

\begin{corollary}
\label{direct-summands} A direct summand is strictly invariant if and only
if it is fully invariant.
\end{corollary}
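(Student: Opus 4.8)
The plan is to fix a decomposition $M=K\oplus L$, with associated projections $\pi_K,\pi_L$ and inclusions $\iota_K,\iota_L$, and to prove the two implications separately, exploiting the splitting in each direction. Note that the direct-summand hypothesis is essential for \emph{both} directions: by the chart, strict invariance does not in general imply full invariance, and conversely the fully invariant subgroup $p\mathbb{Z}\le\mathbb{Z}$ is not strictly invariant.

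For the \emph{if} part, suppose $K$ is fully invariant and let $g\colon K\to M$ be an arbitrary monomorphism. The key point is that, because $K$ is a direct summand, $g$ extends to the endomorphism $\widehat{g}=g\circ\pi_K\in\mathrm{End}(M)$, whose restriction to $K$ is $g$ itself. Full invariance then gives $g(K)=\widehat{g}(K)\subseteq K$, so $K$ is strictly invariant. (Equivalently, the same extension argument shows that a direct summand is fully invariant if and only if it is strongly invariant, after which implication $(4)$ of the chart applies.)

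For the \emph{only if} part, assume $K$ is strictly invariant and take an arbitrary $g\in\mathrm{End}(M)$; I must show $g(K)\subseteq K$. I would record the restriction $g|_K$ through its two components $a=\pi_K g\iota_K\colon K\to K$ and $b=\pi_L g\iota_K\colon K\to L$, so that $g(k)=a(k)+b(k)$ and the goal reduces to $b=0$. The main obstacle is that $g|_K$ itself need not be injective, so strict invariance cannot be applied to it directly. The device I would use is to replace it by the map $f=\iota_K+\iota_L\circ b\colon K\to M$, that is $f(k)=k+b(k)$, which keeps the off-diagonal component $b$ but has identity $K$-component. Then $\pi_K\circ f=\mathrm{id}_K$, so $f$ is automatically a monomorphism, and strict invariance forces $f(K)\subseteq K$. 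Since $K\cap L=0$, this yields $b(k)=\pi_L(f(k))=0$ for every $k\in K$, hence $b=0$ and $g(K)\subseteq K$. This establishes full invariance and completes the proof.
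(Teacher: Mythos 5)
Your proof is correct. The paper offers no separate argument for this corollary---it is meant to follow from Corollary \ref{ch}(3) applied to a complement $L$ of the summand (so $\mathrm{Hom}(K,L)=0$ forces your component $b$ to vanish), and since the proof of Corollary \ref{ch} rests on exactly the monomorphism $h\mapsto h+f(h)$ that you rebuild as $f(k)=k+b(k)$, while your \emph{if} direction is the standard extension $g\circ\pi_K$ implicit in the paper's chart, your argument is essentially the paper's approach, merely inlined and self-contained.
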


For any pair $A$, $N$ of modules, denote by $S_{A}(N)=\sum_{f\in \mathrm{Hom}%
(N,A)}f(N)$ the $N$-\textsl{socle} of $A$, a submodule of $A$.

\begin{proposition}
\label{sd} Let $M=A\oplus B$ be an $R$-module. If $K\leq A$ and $L\leq B$
are submodules such that $K\oplus L$ is strictly invariant in $M$ then

\begin{enumerate}
\item $K$ is strictly invariant in $A$.

\item $L$ is strictly invariant in $B$.

\item $S_{A}(L)\leq K$ and $S_{B}(K)\leq L$.
\end{enumerate}
\end{proposition}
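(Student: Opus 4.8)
The plan is to exploit the direct sum structure $M = A \oplus B$ together with the two projections $\pi_A : M \to A$ and $\pi_B : M \to B$ and the two inclusions $\iota_A : A \to M$, $\iota_B : B \to M$. For part (1), I would start with an arbitrary monomorphism $f : K \to A$ and build from it a monomorphism $\widetilde{f} : K \oplus L \to M$ that restricts to $f$ on $K$ and to the identity on $L$; concretely, set $\widetilde{f}(k+\ell) = f(k) + \ell$. Since $f$ lands in $A$ and the identity on $L$ lands in $B$, and the sum $A \oplus B$ is direct, injectivity of $\widetilde{f}$ reduces to injectivity of $f$ on the $A$-component and the identity on the $B$-component, so $\widetilde{f}$ is a monomorphism. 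By strict invariance of $K \oplus L$ in $M$ we get $\widetilde{f}(K \oplus L) \subseteq K \oplus L$; projecting the inclusion $f(K) = \widetilde{f}(K) \subseteq K \oplus L$ onto $A$ yields $f(K) \subseteq K$, which is exactly what part (1) asserts. Part (2) follows by the symmetric argument with the roles of $A$ and $B$ interchanged.

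**The mixing argument for part (3).** For the third part I would again perturb the identity, but now by a homomorphism that pushes $L$ into $A$. Let $g \in \mathrm{Hom}(L, A)$ be arbitrary and consider the map $h : K \oplus L \to M$ defined by $h(k+\ell) = k + g(\ell) + \ell$, i.e. $h$ is the identity on $K$, and on $L$ it sends $\ell \mapsto g(\ell) + \ell$. The key point is that $h$ is a monomorphism: if $k + g(\ell) + \ell = 0$ then, reading components in $A \oplus B$, the $B$-component forces $\ell = 0$ (since $k, g(\ell) \in A$), and then $k = 0$. Strict invariance of $K \oplus L$ then gives $h(K \oplus L) \subseteq K \oplus L$. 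Applying this to $\ell \in L$ and taking the $A$-component of $h(\ell) = g(\ell) + \ell \in K \oplus L$ shows $g(\ell) \in K$. Since $g$ was an arbitrary element of $\mathrm{Hom}(L,A)$ and $\ell$ arbitrary, we obtain $S_A(L) = \sum_{g} g(L) \subseteq K$. The inclusion $S_B(K) \subseteq L$ follows symmetrically, using a homomorphism $K \to B$ to perturb the identity on $K$ while keeping $L$ fixed.

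**Anticipated obstacle.** The one step that genuinely requires care is verifying that the perturbed maps are monomorphisms rather than merely homomorphisms; everything else is a direct application of the hypothesis that $K \oplus L \in \mathcal{T}(M)$. In part (3) the injectivity hinges precisely on the fact that $g$ maps $L$ into the $A$-summand while the surviving copy of $\ell$ lives in the $B$-summand, so the two contributions cannot cancel — this is where the directness of $M = A \oplus B$ does the essential work. A secondary subtlety is that the definition of $S_A(L)$ ranges over \emph{all} homomorphisms $L \to A$, not just monomorphisms, yet strict invariance only constrains monic maps $K \oplus L \to M$; the perturbation trick resolves this by converting each such $g$ into a genuine monomorphism $h$ of the ambient module, thereby transferring the constraint. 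Once these injectivity checks are in place, the conclusions fall out immediately by projecting onto the appropriate summand.
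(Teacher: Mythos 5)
Your proof is correct and follows essentially the same route as the paper: for part (1) you extend a monomorphism $K\to A$ by the inclusion of $L$ to a monomorphism of $K\oplus L$ into $M$, and for part (3) you perturb the identity by an arbitrary homomorphism $L\to A$ (respectively $K\to B$), exactly as the paper does, with the same injectivity check using directness of $A\oplus B$. Nothing further is needed.
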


\begin{proof}
Let $f:K\rightarrow A$ be a monomorphism. Then $\overline{f}=f\oplus \iota
_{L}:K\oplus L\rightarrow A\oplus B$ is a monomorphism ($\iota
_{N}:L\rightarrow B$ denotes the inclusion map) and, since $K\oplus L$ is
strictly invariant, the inclusion $f(K)\leq K$ follows.

Any homomorphism $f:L\rightarrow A$ induces a homomorphism $\overline{f}%
:K\oplus L\rightarrow A\oplus B$, $\overline{f}(k+\ell )=k+f(\ell )+\ell $
for every $k\in K$, $\ell \in L$. If $k+f(\ell )+\ell =0$ then $\ell =0$ and
we also obtain $k=0$. Therefore, $\overline{f}$ is injective, and it is easy
to see that $f(\ell )\in K$ for every $\ell \in L$.
\end{proof}

In the following result, for any (finite or infinite) cardinal $k$, $M^{(k)}$
denotes the direct sum of $k$ copies of $M$.

\begin{corollary}
\label{+} Let $H$ be a strictly invariant submodule of $M$. Then the
following conditions are equivalent:

$1)$ $H^{2}$ is a strictly invariant submodule of $M^{2}$.

$2)$ $H^{(k)}$ is a strictly invariant submodule of $M^{(k)}$ for every
cardinal number $k$.

$3)$ $H^{(k)}$ is a strongly invariant submodule of $M^{(k)}$ for every
cardinal number $k$.

$4)$ $H$ is a strongly invariant submodule of $M$.
\end{corollary}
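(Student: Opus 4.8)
The plan is to prove the cycle of implications $1)\Rightarrow 4)\Rightarrow 3)\Rightarrow 2)\Rightarrow 1)$, because two of the links are essentially free. Indeed $3)\Rightarrow 2)$ holds since a strongly invariant submodule is automatically strictly invariant (every monomorphism is a homomorphism), and $2)\Rightarrow 1)$ is just the instance $k=2$, as $H^{(2)}=H^{2}$ and $M^{(2)}=M^{2}$. Hence all the real content sits in the two remaining arrows $1)\Rightarrow 4)$ and $4)\Rightarrow 3)$.

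For $1)\Rightarrow 4)$ I would invoke Proposition \ref{sd} directly. Write $M^{2}=A\oplus B$ with $A=B=M$, and take $K=H\le A$ and $L=H\le B$, so that $K\oplus L=H^{2}$ is, by hypothesis, strictly invariant in $M^{2}$. Part 3 of Proposition \ref{sd} then gives $S_{A}(L)\le K$, which under the canonical identifications $A\cong M$ and $L\cong H$ reads $\sum_{f\in \mathrm{Hom}(H,M)}f(H)\le H$. This says precisely that $f(H)\subseteq H$ for every homomorphism $f:H\rightarrow M$, i.e. that $H$ is strongly invariant in $M$. This is the step that carries the weight of the corollary, and it is short only because Proposition \ref{sd} has already done the work of extracting the $N$-socle inequality from strict invariance of the ``diagonal'' direct sum.

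For $4)\Rightarrow 3)$ I would argue componentwise. Let $f:H^{(k)}\rightarrow M^{(k)}$ be an arbitrary homomorphism, let $\iota_{i}:H\rightarrow H^{(k)}$ denote the $i$-th coordinate inclusion and $\pi_{j}:M^{(k)}\rightarrow M$ the $j$-th projection. Each composite $\pi_{j}\circ f\circ \iota_{i}:H\rightarrow M$ is a homomorphism, so by strong invariance of $H$ in $M$ its image lies in $H$. Given $x\in H^{(k)}$, only finitely many coordinates $x_{i}$ are nonzero, so $f(x)=\sum_{i}f(\iota_{i}(x_{i}))$ is a finite sum lying in $M^{(k)}$, and its $j$-th coordinate $\sum_{i}(\pi_{j}\circ f\circ \iota_{i})(x_{i})$ belongs to $H$ for every $j$; thus $f(x)\in H^{(k)}$ and $H^{(k)}$ is strongly invariant. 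Note this reasoning is insensitive to whether $k$ is finite or infinite, precisely because elements of a direct sum have finite support.

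I expect the only genuine subtlety to be the bookkeeping in $1)\Rightarrow 4)$: one must set up the identifications so that $S_{A}(L)$ is literally $\sum_{f\in \mathrm{Hom}(H,M)}f(H)$ rather than a socle of some proper copy. Once Proposition \ref{sd} is applied in the form above, the conclusion $H$ strongly invariant is immediate, and the cycle is complete.
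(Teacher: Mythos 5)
Your proof is correct, and most of it coincides with the paper's own argument: your $1)\Rightarrow 4)$ is precisely the paper's step (the paper only writes ``Follows from Proposition \ref{sd}''; your unwinding of $S_{A}(L)\leq K$ into $\sum_{f\in \mathrm{Hom}(H,M)}f(H)\leq H$ under the identification $A=B=M$, $K=L=H$ is exactly the intended reading), and your $3)\Rightarrow 2)$, $2)\Rightarrow 1)$ are the paper's two ``Obvious'' steps. Where you genuinely diverge is $4)\Rightarrow 3)$: the paper handles finite $k$ by citing \cite{cal} (strong invariance passes to finite direct powers) and then disposes of infinite $k$ by remarking that every element of $M^{(k)}$ lies in a direct summand isomorphic to $M^{(n)}$ for some finite $n$. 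Your componentwise argument --- composing an arbitrary $f:H^{(k)}\rightarrow M^{(k)}$ with the coordinate inclusions $\iota_{i}$ and projections $\pi_{j}$, applying strong invariance of $H$ in $M$ to each $\pi_{j}\circ f\circ\iota_{i}$, and using finite support of elements of a direct sum --- is self-contained, avoids the external citation, and treats finite and infinite cardinals uniformly. It is arguably tidier than the paper's two-case reduction, which leaves to the reader the bookkeeping of choosing a finite summand accommodating both $x$ and $f(x)$; what the paper's route buys instead is brevity, by outsourcing the finite case to a known result.
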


\begin{proof}
1) $\Rightarrow $ 4) Follows from Proposition \ref{sd}.

4) $\Rightarrow $ 3) Since $H$ is strongly invariant in $M$, by~\cite{cal}, $%
H^{(k)}$ is strongly invariant in $M^{(k)}$ for any finite $k$. For any
infinite cardinal $k$, since every element of $M^{(k)}$ belongs to a direct
summand isomorphic to $M^{(n)}$ for some finite $n$, $H^{(k)}$ is also
strongly invariant in $M^{(k)}$.

3) $\Rightarrow $ 2) Obvious.

2) $\Rightarrow $ 1) Obvious.
\end{proof}

\begin{proposition}
\label{w2} Let $M=A\oplus B$ be a module such that the additive group $A$
has no elements of order $2$. If a submodule $H\leq M$ is strictly invariant
then there exist $K\leq A$, $L\leq B$ such that

$1)$ $H=K\oplus L$.

$2)$ $K$ is strictly invariant in $A$ and $L$ is strictly invariant in $B$.

$3)$ $S_{A}(L)\leq K$ and $S_{B}(K)\leq L$.
\end{proposition}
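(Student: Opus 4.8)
The plan is to reduce the statement to Proposition \ref{sd} by first establishing the direct decomposition $H = K \oplus L$ with $K = H \cap A$ and $L = H \cap B$. The inclusion $K \oplus L \subseteq H$ is immediate, so the real content is to show that every $h \in H$ already lies in $K \oplus L$; that is, writing $h = a + b$ with $a \in A$ and $b \in B$, both components $a$ and $b$ belong to $H$. For this I would exploit strict invariance by producing a suitable monomorphism of $M$ whose action detects the $A$-component.

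The key step is to consider the projection $\pi_A \colon M \rightarrow M$ onto $A$ along $B$ and the endomorphism $\phi = 1_M + \pi_A$, which acts as multiplication by $2$ on $A$ and as the identity on $B$. Here is where the hypothesis that $A$ has no elements of order $2$ enters: it makes multiplication by $2$ injective on $A$, so that $\phi(a + b) = 2a + b = 0$ forces $2a = 0$ and $b = 0$, whence $a = 0$ and $b = 0$. Thus $\phi$ is a monomorphism $M \rightarrow M$. Since $H$ is strictly invariant, $\phi(H) \subseteq H$, so for $h = a + b \in H$ we get $2a + b \in H$; subtracting $h$ gives $a = (2a + b) - (a + b) \in H$. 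Hence $a \in H \cap A = K$ and $b = h - a \in H \cap B = L$, which yields $H = K \oplus L$ with $K \leq A$ and $L \leq B$. This proves item $1)$. With the decomposition in hand, items $2)$ and $3)$ follow at once by applying Proposition \ref{sd} to the strictly invariant submodule $K \oplus L = H$ of $M = A \oplus B$.

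I expect the main obstacle to be the identification of the correct monomorphism rather than any computation. The naive approach using the involution $1_M - 2\pi_A$ (which is an automorphism for any $M$, since its square is $1_M$) only yields $2a, 2b \in H$ and leaves a stray factor of $2$ that cannot be removed without an extra hypothesis. The decisive observation is that, precisely when $A$ is $2$-torsion-free, the non-invertible endomorphism $1_M + \pi_A$ is nonetheless injective, and it is this injectivity --- rather than invertibility --- that the notion of strict invariance is designed to exploit. A minor point to check is that $\phi$ is genuinely a monomorphism of $R$-modules and not merely of the underlying additive groups, but this is routine since $\pi_A$, and hence $\phi$, is $R$-linear.
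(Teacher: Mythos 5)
Your proof is correct and takes essentially the same approach as the paper: both arguments apply strict invariance to the endomorphism $1_M+\pi_A$, whose restriction to $H$ is a monomorphism precisely because $A$ has no elements of order $2$, and then obtain items $2)$ and $3)$ from Proposition \ref{sd}. If anything, your rendering is slightly cleaner: you check injectivity of $1_M+\pi_A$ on all of $M$ and recover the $B$-component as $b=h-a$, whereas the paper argues contrapositively and disposes of $\pi_B$ with a ``similarly'' that, read literally, would require $B$ to be $2$-torsion-free --- the correct justification being $\pi_B=1_M-\pi_A$, which is exactly what your subtraction step amounts to.
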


\begin{proof}
By Proposition \ref{sd}, it suffices to prove that $H=K\oplus L$ with $K\leq
A$ and $L\leq B$.

Let $\pi _{A}:M\rightarrow A$ and $\pi _{B}:M\rightarrow B$ be the
projections and suppose $H$ is strictly invariant in $M$. 
If $\pi \in \{\pi _{A},\pi _{B}\}$ and $\pi (H)\nleq H$, there is $0\neq
h\in H$ such that $\pi (h)\notin H$. Therefore $(\pi +1)(h)\notin H$ and so
the restriction $(\pi +1)|_{H}$ is not injective (otherwise $H$ is not
strictly invariant). Hence $\ker \,((\pi +1)|_{H})\neq 0$ and if $\pi (x)=-x$
for $0\neq x\in H$ then $\pi (x)=-\pi (x)$, i.e. $2\pi (x)=0$, a
contradiction.

It follows that $\pi _{A}(H)\leq H$, and similarly $\pi _{B}(H)\leq H$.
Therefore $H=M\oplus N$, where $M=\pi _{A}(H)\leq A$ and $N=\pi _{B}(H)\leq
B $.
\end{proof}

\begin{remark}
\textrm{The previous proposition is not valid if both $A$ and $B$ have
elements of order $2$. }
\end{remark}

This follows from the construction used in Example \ref{non-strongly}.

\section{Strictly invariant subgroups}

As mentioned in the Introduction, for fairly large classes of groups, we
show that our conjecture, \textquotedblleft strictly invariant subgroups of
Abelian groups are strongly invariant\textquotedblright , holds.

We start the investigation of strictly invariant subgroups of Abelian groups
with a consequence of Proposition \ref{sd} (in this section, unless
otherwise stated, \textquotedblleft group\textquotedblright\ means
\textquotedblleft Abelian group\textquotedblright ). By $\mathbb{P}$ we
denote the set of all prime numbers and for an Abelian group $G$ and a prime 
$p$, $G_{p}=\{x\in G:\exists n\in \mathbb{N},p^{n}x=0\}$ denotes the $p$%
-component of $G$. For an element $x\in G$, the $p$\textsl{-height of} $x$,
denoted $h_{p}(x)$, is the smallest integer $n$ such that $x\in p^{n}H$. If $%
x\in p^{n}H$ for all positive integers $n$ then we say that $x$ is of
infinite height.

If $G$ is an Abelian group, we denote by $D(G)$ its divisible part.
Moreover, if $p$ is a prime then $D_{p}(G)$ denotes the $p$-component of $%
D(G)$.

\begin{corollary}
\label{divisible-part} Let $H$ be a strictly invariant subgroup of a group $%
G $. Then

\begin{enumerate}
\item $D(H)=D(G)$ whenever $D(H)$ is not a torsion group.

\item $D_{p}(H)=D_{p}(G)$ whenever $p$ is a prime and $D_{p}(H)\neq 0$.
\end{enumerate}
\end{corollary}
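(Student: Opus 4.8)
The plan is to treat both parts uniformly. First I would record the trivial inclusions: since $D(G)$ (resp.\ $D_p(G)$) is the maximal divisible (resp.\ maximal divisible $p$-) subgroup of $G$, every divisible subgroup of $H$ lies in it, so $D(H)\le D(G)$ and $D_p(H)\le D_p(G)$ always hold. As divisible subgroups are injective, hence direct summands, I can write $D(G)=D(H)\oplus E$ for part~1 and $D_p(G)=D_p(H)\oplus E$ for part~2, with $E$ divisible. The whole statement thus reduces to proving $E=0$ under the respective hypotheses ($D(H)$ non-torsion, resp.\ $D_p(H)\neq 0$), and I would derive a contradiction from $E\neq 0$.

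Next I would set up the test monomorphism. Choose a summand $C$ of $D(H)$ with $C\cong\mathbb{Q}$ (available since $D(H)$ is non-torsion) in part~1, and a summand $C\cong\mathbb{Z}(p^{\infty})$ of $D_p(H)$ in part~2. If $E\neq 0$, pick a nonzero divisible summand $C'$ of $E$ and a nonzero homomorphism $\sigma\colon C\to C'$: this exists because $\mathbb{Q}$ maps onto any nonzero divisible group, while in part~2 one simply takes an isomorphism $C\to C'$ of Prüfer groups. A short argument shows $H\cap E$ is reduced (its divisible part would be a divisible subgroup of $H$ lying in $E$, hence inside $D(H)\cap E=0$, resp.\ $D_p(H)\cap E=0$), so $C'\cap H$ is a proper, reduced subgroup of $C'$. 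Since $C$ is injective and $(C'\cap H)\cap C=0$, I can split $H=C\oplus W$ with $C'\cap H\subseteq W$; let $r\colon H\to C$ be the projection and define $f=\mathrm{id}_H+\sigma\circ r$, so $f(c+w)=c+\sigma(c)+w$.

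The crux is injectivity of $f$, and this is where the only real subtlety lies: the naive diagonal map can have a nonzero (finite) kernel coming precisely from the overlap $C'\cap H$, which is why Corollary~\ref{ch}(3) does not apply directly when no divisible summand of $E$ meets $H$ trivially. If $f(c+w)=0$, then $w=-c-\sigma(c)\in D(G)$ (resp.\ $D_p(G)$), so $w\in H\cap D(G)=D(H)\oplus(H\cap E)$; projecting to $E$ gives $-\sigma(c)\in H\cap E\cap C'=C'\cap H\subseteq W$, whence $-c=w+\sigma(c)\in W$ and therefore $c\in C\cap W=0$. The engineered containment $C'\cap H\subseteq W$ is exactly what forces the kernel to vanish. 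Once $f$ is a monomorphism, strict invariance yields $f(C)\subseteq H$; as $C\subseteq H$, this gives $\sigma(C)=C'\subseteq H$, so $C'\subseteq H\cap E$. But $H\cap E$ is reduced while $C'$ is nonzero and divisible, a contradiction. Hence $E=0$, proving $D(H)=D(G)$ and $D_p(H)=D_p(G)$ respectively. I expect the injectivity step, together with the correct choice of the complement $W$, to be the main obstacle; everything else is bookkeeping with divisible summands.
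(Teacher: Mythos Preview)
Your argument is correct, and the injectivity verification with the carefully chosen complement $W\supseteq C'\cap H$ works as stated (the existence of such a $W$ follows from injectivity of $C$ via the quotient $H/(C'\cap H)$). One tiny imprecision: in part~1 you assert $\sigma(C)=C'$, which need not hold unless $C'$ is chosen indecomposable; but this is harmless, since $\sigma(C)$ is in any case a nonzero divisible subgroup of $H\cap E$, and that already contradicts reducedness of $H\cap E$.

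Your route differs from the paper's in the choice of decomposition. The paper does not split $D(G)=D(H)\oplus E$; instead it writes $H=H_{0}\oplus D(H)$ with $H_{0}$ reduced and then uses \cite[Theorem~21.2]{fuc1} to obtain $G=K\oplus D(H)$ with $H_{0}\le K$. With this alignment, Proposition~\ref{sd} (applied to $H_{0}\oplus D(H)\le K\oplus D(H)$) gives $S_{K}(D(H))\le H_{0}$; since images of divisible groups are divisible and $H_{0}$ is reduced, this forces $S_{K}(D(H))=0$, i.e.\ $\mathrm{Hom}(D(H),K)=0$, from which both conclusions are immediate. The ``shear'' monomorphism you build by hand is exactly the mechanism hidden inside the proof of Proposition~\ref{sd}; what the paper's decomposition buys is that the reduced part of $H$ sits entirely inside the complement of $D(H)$, so the overlap $C'\cap H$ that forced you to engineer the complement $W$ simply never arises. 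Your approach is more self-contained but pays for it with the extra bookkeeping; the paper's is shorter because it front-loads the work into Proposition~\ref{sd} and a well-chosen splitting of $G$.
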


\begin{proof}
We chose a decomposition $H=H_{0}\oplus D(H)$. Using \cite[Theorem 21.2]%
{fuc1}, we can find a direct decomposition $G=K\oplus D(H)$ such that $%
H_{0}\leq K$. By Proposition \ref{sd}, $S_{K}(D(H))\leq H_{0}$. Since $H_{0}$
is reduced and every image of a divisible group is divisible, it follows
that $S_{K}(D(H))=0$.

1. If $D(H)$ is not torsion then it has a direct summand isomorphic to $%
\mathbb{Q}$, hence for every non-reduced group $L$ we have non-zero
homomorphisms $D(H)\rightarrow L$. It follows that $K$ is reduced, hence $%
D(H)=D(G)$.

2. If $D_{p}(H)\neq 0$ then $D(H)$ has a direct summand isomorphic to $%
\mathbb{Z}(p^{\infty })$. From $S_{K}(D(H))=0$ it follows that $D_{p}(K)=0$,
hence $D_{p}(H)=D_{p}(G)$.
\end{proof}

\begin{theorem}
\label{str-inv-p} Let $G$ be a group and let $H$ be a $p$-subgroup of $G$.
Then $H$ is strictly invariant in $G$ if and only if it satisfies one of the
following conditions:

\begin{enumerate}
\item $H=G_{p}$.

\item there exists a non-negative integer $n$ such that $H=G[p^{n}]$.

\item there exists a non-negative integer $n$ such that $H=G[p^{n}]+D_{p}(G)$%
.
\end{enumerate}
\end{theorem}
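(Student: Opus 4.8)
The plan is to treat the two implications separately, after a reduction that lets me assume $G$ is a $p$-group. Since any homomorphism defined on the $p$-group $H$ has image in the $p$-component, $H$ is strictly invariant in $G$ if and only if it is strictly invariant in $G_p$; so I replace $G$ by $G_p$ and write $G=D\oplus R$ with $D=D_p(G)=D(G)$ divisible and $R$ reduced. For the sufficiency I would check that each of the three families is in fact \emph{strongly} invariant, which is more than required. For $H=G_p=G$ this is trivial; for $H=G[p^n]$ every $f\in\mathrm{Hom}(H,G)$ satisfies $p^nf(x)=f(p^nx)=0$, so $f(x)\in G[p^n]$; and for $H=G[p^n]+D_p(G)$ the image of the summand $G[p^n]$ lies in $G[p^n]$ as before, while $f(D_p(G))$ is a divisible $p$-torsion subgroup and hence sits inside the maximal divisible subgroup $D_p(G)$. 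Thus $f(H)\subseteq H$ in every case.

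For the necessity, let $H$ be strictly invariant. By Corollary \ref{divisible-part}, either $H$ is reduced or $D=D_p(H)\subseteq H$. If $D\subseteq H$ then $H=D\oplus(H\cap R)$, and Proposition \ref{sd} shows $K:=H\cap R$ is strictly invariant in $R$; granting the core claim below, $K$ is $R$ or $R[p^n]$, which yields $H=G$ or $H=D\oplus R[p^n]=G[p^n]+D_p(G)$. If instead $H$ is reduced, the same core claim applied with ambient group $G$ gives $H=G[p^n]$ (or $H=G$, which forces $D=0$). Here the relevant tools are Proposition \ref{w2} — which, when $p$ is odd, splits $H$ as $K\oplus L$ with $K\le D$ and $L\le R$ — together with the accompanying inclusions $S_D(L)\le K$ and $S_R(K)\le L$ of Proposition \ref{sd}; these socle inclusions are what force the exponents on the divisible and reduced parts to coincide, ruling out mixed subgroups such as $D[p^{n'}]\oplus R[p^n]$ with $n'\ne n$. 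The case $p=2$ is not covered by Proposition \ref{w2} and would need a direct monomorphism argument in its place.

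Everything therefore rests on the \textbf{core claim}: a reduced strictly invariant subgroup $K$ of a $p$-group $P$ equals $P$ (only possible when $P$ is reduced) or $P[p^n]$ for some $n$. I would prove this in two stages: first that a nonzero such $K$ contains the whole socle $P[p]$, and then, by induction on the exponent, that $K\supseteq P[p^n]$ whenever $K$ contains an element of order $p^n$; combined with $K\subseteq P[p^n]$ when $K$ is bounded, this gives $K=P[p^n]$, while an unbounded $K$ is forced to be all of $P$. Each inclusion is obtained by the same mechanism: given a target $y$ and a witness $x\in K$ of the same order, I manufacture a monomorphism $P\to P$ (or $K\to P$) carrying $x$ onto $y$ — the perturbation $h\mapsto h+\theta(h)$ of Corollary \ref{ch} being the model — and then invoke strict invariance to conclude $y\in K$. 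The main obstacle is precisely this construction: in a reduced $p$-group heights may be badly behaved (elements of infinite height, absence of complementary cyclic summands), so transporting a height-$s$ element onto an arbitrary element of height at least $s$ while preserving injectivity is delicate, and it is here that genuine structure theory of $p$-groups (basic subgroups and height/indicator bookkeeping) must enter.
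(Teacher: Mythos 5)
Your overall framework is correct and in fact parallels the paper's: the reduction to $G$ a $p$-group, the sufficiency direction via checking that each of the three families is strongly invariant, and the splitting off of the divisible part (you do it with Corollary \ref{divisible-part} plus the modular law and Proposition \ref{sd}; the paper writes $G=K\oplus D(H)$, $H=H_0\oplus D(H)$ with $H_0\le K$ via Fuchs's Theorem 21.2 — the same reduction to a \emph{reduced} strictly invariant subgroup). The side remarks about Proposition \ref{w2} and the case $p=2$ are harmless but unnecessary: your main line never needs \ref{w2}. The genuine gap is exactly where you flag it: the \textbf{core claim} — that a reduced strictly invariant subgroup of a $p$-group equals the whole group or $P[p^n]$ — is never proved. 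You sketch a two-stage plan (socle first, then induction on exponent) and then explicitly concede that the key step, manufacturing a monomorphism that carries a witness onto an arbitrary target element, is "delicate" and requires structure theory you do not supply. That step is not a routine technicality to be deferred; it \emph{is} the theorem, and it occupies essentially the entire proof in the paper.

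Concretely, here is what is missing. For unbounded reduced $H$, the paper fixes $y\in G$, takes $u$ minimal with $p^uy\in H$, and splits into three subcases: (i) $p^uy=0$, where any cyclic summand $\langle x\rangle$ of $H$ with $\mathrm{ord}(x)\ge\mathrm{ord}(y)$ works; (ii) $p^uy\neq 0$ and $\langle p^uy\rangle$ contains elements of infinite height in $H$, where one takes an unbounded basic subgroup $\oplus_n B_n$ (Fuchs, Theorem 32.4), chooses $n$ with $B_n\neq 0$ and $p^n>\mathrm{ord}(y)$, and a $B_n$-high subgroup $C\ge\langle p^uy\rangle$ so that $H=B_n\oplus C$; (iii) $p^uy\neq 0$ with all heights finite, where $\langle p^uy\rangle$ is embedded into a basic subgroup (Fuchs, Theorem 33.4). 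In every subcase one arranges $H=\langle x\rangle\oplus L$ with $\mathrm{ord}(x)\ge \mathrm{ord}(y)$ and $p^uy\in L$, so that $f(mx+\ell)=mx+my+\ell$ is a well-defined monomorphism $H\to G$, and strict invariance yields $y\in H$. The bounded case is then settled not by your proposed induction on exponent but by a single isomorphism argument: if $\exp H=p^n$ and $H<G[p^n]$, pick $x\in H$ and $y\in G[p^n]\setminus H$ both of order $p^n$, write $G[p^n]=\langle x\rangle\oplus K=\langle y\rangle\oplus L$ (Fuchs, Theorem 27.1), use Dedekind's law to get $H=\langle x\rangle\oplus(H\cap K)$, and transport $H\cap K$ into $L$ to produce a subgroup isomorphic to $H$ but not contained in $H$ — contradicting strict invariance. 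Until you supply arguments of this kind (or equivalents), your necessity direction is an outline resting on an unproven lemma, and the height/basic-subgroup bookkeeping you defer is precisely the content that cannot be omitted.
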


\begin{proof}
Suppose $H$ is strictly invariant. Since $H$ is a $p$-group, we can suppose
that $G$ is also a $p$-group.

As in the proof of Corollary \ref{divisible-part} we can find direct
decompositions $G=K\oplus D(H)$ and $H=H_{0}\oplus D(H)$ with $H_{0}\leq K$.
Using Proposition \ref{sd} it follows that $H_{0}$ is strictly invariant in $%
K$. Therefore, we can assume w.l.o.g. that $H$ is reduced.

\textit{Case I: $H$ is not bounded.} We will prove that $H=G$. Let us fix an
element $y\in G$. We can assume w.l.o.g that there exists $u$, the smallest
positive integer such that $p^{u}y\in H$.

If $p^uy=0$, we chose $\langle x\rangle$ a direct summand of $H$ such that $%
\mathrm{ord}(x)\geq\mathrm{ord}(y)$. If $H=\langle x\rangle \oplus L$ then $%
\langle x+y\rangle +L=\langle x+y\rangle \oplus L$ and $\mathrm{ord}(x+y)=%
\mathrm{ord}(x)$. It follows that $\langle x+y\rangle +L\cong H$, and we
obtain that $x+y\in H$, hence $y\in H$.

Suppose that $p^{u}y\neq 0$ and we can find non-zero elements in $\langle
p^{u}y\rangle $ whose heights computed in $H$ are infinite. Let $\oplus
_{n>0}B_{n}$ be a basic $p$-subgroup of $H$, where for every $n$ the group $%
B_{n}$ is isomorphic to a direct sum of cyclic groups of order $p^{n}$ (see 
\cite[Theorem 32.4]{fuc1}). Since $H$ has an unbounded basic subgroup, there
exists $n>0$ such that $p^{n}>\mathrm{ord}(y)$ and $B_{n}\neq 0$. Since all
non-zero elements of $B_{n}$ are of finite height, it follows that $\langle
p^{u}y\rangle \cap B_{n}=0$, hence we can find a $B_{n}$-high subgroup $%
C\leq H$ such that $\langle p^{u}y\rangle \leq C$. By the proof of \cite[%
Proposition 27.1]{fuc1} it follows that $H=B\oplus C$. Therefore, there
exists a decomposition $H=\langle x\rangle \oplus L$ such that $\mathrm{ord}%
(x)\geq \mathrm{ord}(y)$ and $p^{u}y\in L$. Write $H=\langle x\rangle \oplus
L$, and consider the homomorphism $f:H\rightarrow G$, $f(mx+\ell
)=mx+my+\ell $, for all $m\in \mathbb{Z}$ and $\ell \in L$. If $kx+ky+\ell
=0 $, it follows that $ky\in H$, hence $p^{u}$ divides $k$. Therefore $%
ky+\ell \in L$, hence $kx=0$. Since $\mathrm{ord}(x)\geq \mathrm{ord}(y)$ we
obtain $ky=0$, and it follows that $\ell =0$. Finally, $f$ is a monomorphism
and it is easy to conclude that $y\in H$.

If $p^{u}y\neq 0$ and all non-zero elements of $\langle p^{u}y\rangle $ are
of finite heights (computed in $H$) then by \cite[Theorem 33.4]{fuc1}, there
exists a basic subgroup $B$ of $H$ such that $\langle p^{u}\rangle \leq B$.
Since $B$ is unbounded, we can find a cyclic direct summand $\langle
x\rangle $ of $B$, hence of $G$, such that $\mathrm{ord}(x)\geq \mathrm{ord}%
(y)$, and $\langle x\rangle \cap \langle p^{u}y\rangle =0$. Then there
exists a decomposition $H=\langle x\rangle \oplus L$ such that $p^{u}y\in L$%
, and we can repeat the proof used in the previous case to conclude that $%
y\in H$.

\textit{Case II: $H$ is bounded.} If $p^{n}=\exp H$ then clearly $H\leq
G[p^{n}]$. Assume that $H<G[p^{n}]$ and let $x\in H$ be such that $\mathrm{%
ord}(x)=p^{n}$. Since $G[p^{n}]$ is generated by the elements of order $%
p^{n} $ in $G$, there exists $y\in G[p^{n}]$ such that $\mathrm{ord}%
(y)=p^{n} $ and $y\notin H$. By \cite[Theorem 27.1]{fuc1}, there exist $%
K,L\leq G$ such that $G[p^{n}]=\left\langle x\right\rangle \oplus
K=\left\langle y\right\rangle \oplus L$. By Dedekind's law, $H=H\cap
G[p^{n}]=\left\langle x\right\rangle \oplus (H\cap K)$. Since $L\cong K$,
there exists $L_{1}\leq L $ such that $L_{1}\cong H\cap K$ and it follows
that $\left\langle y\right\rangle \oplus L_{1}$ and $\left\langle
x\right\rangle \oplus (H\cap K)=H$ are isomorphic. Since $y\notin H$ this
contradicts the fact that $H$ is strictly invariant in $G$. Thus $H=G[p^{n}]$%
, as desired.

As for the converse, it is enough to observe that if $H$ verifies any of the
conditions 1--3 then it is strongly invariant.
\end{proof}

\begin{remark}
\textrm{For the case $p\neq 2$ the above result can also be proved by using
Proposition \ref{w2}. }
\end{remark}

Namely, if $H$ has an unbounded basic subgroup, we write $G=D_{p}(G)\oplus R$
with reduced $R$ and $H=D_{p}(G)\oplus K$ with $K\leq R$. Let $k$ be a
positive integer. There exists a cyclic direct summand $C\cong \mathbb{Z}%
(p^{n})$ of $R$ with $n\geq k$. Applying Proposition \ref{w2} for the direct
decomposition $G=(D_{p}(G)\oplus C)\oplus L$, it follows that $%
S_{L}(D_{p}(G)\oplus C)=S_{L}(C)\leq H$. It is easy to see that $%
G[p^{n}]\leq H$, hence $G[p^{k}]\leq H$ for all $k$ and the proof is
complete.\textrm{\ }

\bigskip

From now on, starting with the next corollary, the results are all in the
line of the conjecture, stating that every strictly invariant subgroup of an
Abelian group is strongly invariant.

First we are able to show that

\begin{corollary}
\label{che} Every torsion strictly invariant subgroup of any group is
strongly invariant.
\end{corollary}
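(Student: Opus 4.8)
The plan is to reduce the statement, prime by prime, to the $p$-group case already handled in Theorem~\ref{str-inv-p}. Let $H$ be a torsion strictly invariant subgroup of $G$. Being torsion, $H$ splits as $H=\bigoplus_{p\in\mathbb{P}}H_p$ into its primary components, where $H_p=H\cap G_p$. I would first argue that each $H_p$ is itself strictly invariant in $G$, then invoke Theorem~\ref{str-inv-p} to promote strict to strong invariance on each component, and finally reassemble.

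For the first step, fix a prime $p$ and a monomorphism $g\colon H_p\to G$. Since $H_p$ is a $p$-group, $g$ preserves $p$-power order, so $g(H_p)\subseteq G_p$. Writing $H=H_p\oplus K$ with $K=\bigoplus_{q\neq p}H_q$, I extend $g$ to $\overline{g}\colon H\to G$ by $\overline{g}(x+k)=g(x)+k$ for $x\in H_p$, $k\in K$. Because $g(H_p)\subseteq G_p$ while $K\subseteq\bigoplus_{q\neq p}G_q$, the two images sit in complementary primary components of $G$; hence $\overline{g}(x+k)=0$ forces $g(x)=0$ and $k=0$, and injectivity of $g$ gives $x=0$. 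Thus $\overline{g}$ is a monomorphism, and strict invariance of $H$ yields $\overline{g}(H)\subseteq H$. In particular $g(H_p)=\overline{g}(H_p)\subseteq H\cap G_p=H_p$, so $H_p$ is strictly invariant in $G$.

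Now Theorem~\ref{str-inv-p}, applied to the strictly invariant $p$-subgroup $H_p$, forces $H_p$ to equal $G_p$, $G[p^n]$, or $G[p^n]+D_p(G)$ for some $n$; the converse half of that theorem records that each of these is in fact strongly invariant in $G$. So every $H_p$ is strongly invariant. To conclude, take any homomorphism $f\colon H\to G$: by additivity $f(H)=\sum_p f(H_p)$, and strong invariance of each component gives $f(H_p)\subseteq H_p\subseteq H$, whence $f(H)\subseteq H$. Therefore $H$ is strongly invariant.

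The only genuine work lies in the extension-by-identity construction and the check that $\overline{g}$ stays injective; all the depth is borrowed from Theorem~\ref{str-inv-p}, which already settles the primary case. I expect the main point to watch is precisely this reduction --- that a monomorphism of a single component $H_p$ can be extended to a monomorphism of all of $H$ without leaving $H$, so that strict invariance of $H$ transfers to each $H_p$.
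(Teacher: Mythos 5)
Your proof is correct and follows essentially the same route as the paper: reduce to the primary components of $H$ and invoke Theorem~\ref{str-inv-p}. The only difference is that you prove explicitly the two reduction steps --- that each $H_p$ inherits strict invariance in $G$ (via your extension-by-identity monomorphism) and that strong invariance of the components reassembles into strong invariance of $H$ --- whereas the paper leaves the first step implicit and delegates the second to a citation of \cite{cal}.
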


\begin{proof}
It is proved in \cite{cal} that a torsion subgroup is strongly invariant if
and only if all its primary components are strongly invariant. Using Theorem %
\ref{str-inv-p}, the conclusion now follows. Indeed, all subgroups from the
theorem, $G_{p}$, $G[p^{n}]$ and $G[p^{n}]+D_{p}(G)$ are strongly invariant.
\end{proof}

\bigskip

In the following proposition, $r_{p}(K)$ denotes the $p$-rank of $K$. We
will prove that if the divisible part of a group is large enough, then all
strictly invariant subgroups are strongly invariant.

\begin{proposition}
Let $G=D(G)\oplus R$ be a group and $r_{p}(D(G))\geq \max \{r_{p}(R),\aleph
_{0}\}$ for every $p\in \mathbb{P}\cup \{0\}$. Then every strictly invariant
subgroup of $G$ is strongly invariant.
\end{proposition}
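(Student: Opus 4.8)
The plan is to reduce strong invariance to the available strict invariance by writing an arbitrary homomorphism as a difference of two monomorphisms. Fix a strictly invariant subgroup $H\le G$ and an arbitrary homomorphism $f\colon H\to G$; I will produce a monomorphism $e\colon H\to G$ such that $f+e$ is again a monomorphism. Granting this, strict invariance applied to $e$ and to $f+e$ gives $e(H)\subseteq H$ and $(f+e)(H)\subseteq H$, whence $f(h)=(f+e)(h)-e(h)\in H$ for every $h\in H$, i.e. $f(H)\subseteq H$. As $f$ is arbitrary, $H$ is strongly invariant.

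The monomorphism $e$ will take values in $D(G)$, and this is where the rank hypothesis is used. Since $R$ is a direct summand, $r_p(G)=r_p(D(G))+r_p(R)$ for every $p\in\mathbb{P}\cup\{0\}$; because $r_p(R)\le r_p(D(G))$ and $r_p(D(G))\ge\aleph_0$, we get $r_p(G)=r_p(D(G))$ and hence $r_p(H)\le r_p(D(G))$ for all $p$. As every $r_p(D(G))$ is infinite, I can split $D(G)=D'\oplus D''$ with $r_p(D')=r_p(D'')=r_p(D(G))$ for all $p$ (each homogeneous component has an infinite index set, which decomposes into two subsets of the same cardinality). Since the divisible hull $E(H)$ has the same ranks as $H$, it follows that $H$ embeds into $D'$ and into $D''$ separately; the same holds for every subgroup of $H$.

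To arrange injectivity of $f+e$, write $G=D(G)\oplus R$ and set $H'=\{h\in H: f(h)\in D(G)\}$. If $e$ maps into $D(G)$, any $h\in\ker(f+e)$ has $f(h)\in D(G)$, so $\ker(f+e)\subseteq H'$, and on $H'$ the condition $(f+e)(h)=0$ reads $e(h)=-f(h)$. I therefore fix an embedding $\alpha\colon H\hookrightarrow D'$ and put $e=(\alpha,\gamma)\colon H\to D'\oplus D''=D(G)$, which is automatically injective (its $D'$-component already is) for every choice of $\gamma\colon H\to D''$. Writing the components of $-f|_{H'}$ as $g'\colon H'\to D'$ and $g''\colon H'\to D''$, the set on which $e|_{H'}$ agrees with $-f|_{H'}$ equals $\{h\in K:\gamma(h)=g''(h)\}$, where $K=\ker\big((\alpha-g')|_{H'}\big)$. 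I now choose an embedding $\mu\colon K\hookrightarrow D''$ (possible since $r_p(K)\le r_p(H)\le r_p(D'')$), define $\gamma|_K=\mu+g''|_K$, and extend $\gamma$ to $H$ using injectivity of the divisible group $D''$. Then $(\gamma-g'')|_K=\mu$ is injective, so the agreement set is trivial, $\ker(f+e)=0$, and $e$, $f+e$ are the two required monomorphisms.

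The step I expect to be the crux is exactly making $f+e$ injective when $f$ is far from being a monomorphism. The naive idea of embedding $H$ into a divisible complement of $f(H)\cap D(G)$ fails, because $f(H)\cap D(G)$ may already exhaust the rank of $D(G)$ (for instance when $f$ embeds $H$ essentially into $D(G)$), leaving no full-rank complement. Splitting off two independent full-rank divisible summands $D'$ and $D''$ is what circumvents this: $D'$ forces $e$ to be injective regardless of $f$, while $D''$ supplies the free room needed to separate the points of $K$ on which $\alpha$ and $f$ happen to cancel. Beyond strict invariance, the only facts invoked are the rank inequalities of the hypothesis and the preservation of ranks under passing to the divisible hull.
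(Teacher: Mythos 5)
Your proof is correct, and it takes a genuinely different route from the paper's. The paper first invokes Corollary \ref{divisible-part} to write $H=D(G)\oplus K$ with $K\leq R$, reduces the problem to a homomorphism $f\colon K\to R$, and then uses the rank hypothesis to produce an embedding $\alpha\colon H\to D(G)$ and a \emph{single} monomorphism $g\colon H\to G$, $g(d+k)=\alpha(d+k)+f(k)$; injectivity of $g$ is automatic because $\alpha$ lands in $D(G)$ while $f$ lands in $R$, and strict invariance is applied once, the correction term $\alpha(k)$ being subtractable because it lies in $D(G)\subseteq H$. You skip the structural reduction entirely and prove the cleaner statement that, under the rank hypotheses, \emph{every} homomorphism $f\colon H\to G$ is a difference of two monomorphisms $H\to G$, applying strict invariance twice (to $e$ and to $f+e$); the price is the finer splitting $D(G)=D'\oplus D''$ into two full-rank divisible summands, with $D'$ forcing injectivity of $e$ regardless of $f$ and $D''$ supplying the room to make $\gamma-g''$ injective on the coincidence kernel $K$ — all the rank bookkeeping and the extension step via injectivity of $D''$ check out. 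What your route buys is uniformity and self-containedness: it never uses Corollary \ref{divisible-part}, which matters because, as written, the paper's opening step ``$H=D(G)\oplus K$'' only covers strictly invariant subgroups that contain $D(G)$; it fails for instance for $H=G[p^{n}]$, which by Theorem \ref{str-inv-p} is strictly (indeed strongly) invariant but bounded, hence reduced, a case the paper's argument would have to route through Corollary \ref{che} or handle separately. What the paper's route buys, for the subgroups it covers, is brevity and structural information, exhibiting $H$ as $D(G)\oplus K$ with $K$ strongly invariant in $R$.
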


\begin{proof}
Let $H$ be a strictly invariant subgroup of $G$. Then $H=D(G)\oplus K$ (we
use Corollary \ref{divisible-part}) and we can suppose that $K\leq R$. It
suffices to prove that $K$ is strongly invariant in $R$. In order to do
this, let us fix a homomorphism $f:K\rightarrow R$.

By the rank hypotheses it follows that $K$ can be embedded in $D(G)$ and $%
D(G)\cong D(G)\oplus D(G)$. Therefore, there exists a monomorphism $\alpha
:H\rightarrow D(G)$. We consider the homomorphism $g:H\rightarrow G$,
defined by $g(d+k)=\alpha (d+k)+f(k)$ for all $d\in D(G)$ and $k\in K$. It
is easy to see that $g$ is a monomorphism. Therefore $f(k)\in K$ for all $%
k\in K$ and the proof is complete.
\end{proof}

\bigskip

The following results refer to torsion-free subgroups or torsion-free groups.

\begin{proposition}
Let $H$ be a strictly invariant torsion-free subgroup of a group $G$. If $H$
is of finite rank then it is strongly invariant.
\end{proposition}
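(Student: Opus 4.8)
The plan is to reduce an arbitrary homomorphism to a monomorphism by an additive perturbation, and then apply strict invariance. Fix $f\in\mathrm{Hom}(H,G)$; the goal is $f(H)\subseteq H$. I want to produce an integer $m$ for which $g_m:=m\iota+f$ is a monomorphism, where $\iota\colon H\hookrightarrow G$ is the inclusion. Once such an $m$ is available, strict invariance of $H$ gives $g_m(H)\subseteq H$, so $mh+f(h)\in H$ for every $h\in H$; since $mh\in H$, subtracting yields $f(h)=g_m(h)-mh\in H$. Hence $f(H)\subseteq H$ for every $f$, which is exactly strong invariance.

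To find a suitable $m$, I pass to the $\mathbb{Q}$-vector spaces $V=\mathbb{Q}\otimes H$ and $\mathbb{Q}\otimes G$. Flatness of $\mathbb{Q}$ turns the inclusion $H\leq G$ into an embedding $V\hookrightarrow\mathbb{Q}\otimes G$, and $\dim_{\mathbb{Q}}V=\mathrm{rank}(H)=n<\infty$ because $H$ is torsion-free of finite rank. The map $f$ induces a $\mathbb{Q}$-linear $f_{\mathbb{Q}}\colon V\to\mathbb{Q}\otimes G$, and the perturbation $g_m$ induces $(g_m)_{\mathbb{Q}}=m\cdot\mathrm{incl}+f_{\mathbb{Q}}$. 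A nonzero kernel vector $v$ must satisfy $f_{\mathbb{Q}}(v)=-mv\in V$; thus $v$ lies in the subspace $U=\{v\in V:f_{\mathbb{Q}}(v)\in V\}$, and writing $\psi:=f_{\mathbb{Q}}|_U$ such a $v$ satisfies $\psi(v)=-mv$. Once I show that only finitely many integers $m$ admit such a $v$, I choose $m$ outside this finite set, so $(g_m)_{\mathbb{Q}}$ is injective; flatness then forces $\ker g_m$ to be torsion, and since $H$ is torsion-free $\ker g_m=0$, giving the required monomorphism.

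The one delicate point is the counting step, which is subtle because a priori $\psi$ maps $U$ into $V$, not into $U$, so ordinary eigenvalue theory does not immediately apply. The resolution is that every relevant vector satisfies $\psi(v)=-mv\in\mathbb{Q}v\subseteq U$, so if $W$ denotes the span of all vectors $v$ arising for some integer $m$, then $\psi(W)\subseteq W$ and $\psi|_{W}$ is a genuine endomorphism of the finite-dimensional space $W\subseteq V$. The vectors for distinct values $m$ are eigenvectors of $\psi|_{W}$ for the distinct eigenvalues $-m$, hence linearly independent; since $\dim W\leq n$, at most $n$ integers $m$ can make $(g_m)_{\mathbb{Q}}$ singular. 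This is the heart of the argument; the remaining steps (flatness, and subtracting $m\iota$ at the end) are routine.
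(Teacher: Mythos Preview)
Your proof is correct and follows the same strategy as the paper: perturb $f$ to $g_m=m\iota+f$ and show that only finitely many integers $m$ can fail to make $g_m$ a monomorphism, via a linear-independence-of-eigenvectors argument bounded by the finite rank of $H$. The only cosmetic difference is that you pass to $\mathbb{Q}\otimes(-)$ and invoke the standard fact that eigenvectors for distinct eigenvalues are independent, whereas the paper works directly in $H$ and proves the independence of the would-be eigenvectors $x_k$ by an explicit inductive elimination.
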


\begin{proof}
Let $f:H\rightarrow G$ be a homomorphism. We claim that there exists a
positive integer $k$ such that for all $x\in H$ we have $f(x)\neq kx$.

By contradiction suppose that the above claim fails. Then for every positive
integer $k$ there exists $x_{k}\in H$ such that $f(x_{k})=kf(x_{k})$. We
will prove by induction on the cardinality of $S$ that every non-empty
finite subset $S\subseteq \{x_{k}:k\in \mathbb{N}^{\star }\}$ is linearly
independent. Since for $|S|=1$ the property is obvious, suppose that all
non-empty subsets $S\subseteq \{x_{k}:k\in \mathbb{N}^{\star }\}$ of
cardinality at most $n$ are linearly independent. Let $\{x_{k_{1}},\dots
,x_{k_{n+1}}\}$ be a subset of cardinality $n+1$, and suppose that there
exist integers $\alpha _{1},\dots ,\alpha _{n+1}$ such that $%
\sum_{i=1}^{n+1}\alpha _{i}x_{k_{i}}=0$. Applying $f$ we obtain $%
\sum_{i=1}^{n+1}k_{i}\alpha _{i}x_{k_{i}}=0$, and so $%
\sum_{i=1}^{n}(k_{i}-k_{n+1})\alpha _{i}x_{k_{i}}=0$. By the induction
hypothesis it follows that $(k_{i}-k_{n+1})\alpha _{i}=0$ for all $i\in
\{1,\dots ,n\}$, and now it is easy to conclude that $\{x_{k_{1}},\dots
,x_{k_{n+1}}\}$ is linearly independent. Hence the rank of $H$ is infinite,
a contradiction.

Let $k$ be a positive integer such that for all $x\in H$ we have $f(x)\neq
kx $. Then the map $g:H\rightarrow G$, $g(x)=kx+f(x)$ is a monomorphism.
Using the strictly invariance of $H$, it follows that $g(H)\subseteq H$,
hence $f(H)\subseteq H$, and the proof is complete.
\end{proof}

\begin{proposition}
If $G$ is torsion-free and all rank 2 pure subgroups of $G$ are
indecomposable, then every strictly invariant subgroup of $G$ is strongly
invariant.
\end{proposition}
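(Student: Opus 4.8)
The plan is to fix an arbitrary homomorphism $f\colon H\to G$ and prove $f(H)\subseteq H$, reusing the modification trick from the previous proposition. For a positive integer $k$ put $g_k=k\,\mathrm{id}_H+f$. If $g_k$ is a monomorphism for \emph{some} $k$, then strict invariance gives $g_k(H)\subseteq H$, and since $kH\subseteq H$ as well, we obtain $f(h)=g_k(h)-kh\in H$ for all $h\in H$, finishing the proof. Hence it suffices to exhibit one $k\ge 1$ for which $g_k$ is injective, i.e.\ for which there is no $0\neq h\in H$ with $f(h)=-kh$.

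I would argue by contradiction: suppose that for \emph{every} $k\ge 1$ the map $g_k$ is non-injective, so that for each $k$ there is $0\neq h_k\in H$ with $f(h_k)=-k h_k$. As in the finite-rank argument, eigenvectors belonging to distinct eigenvalues are linearly independent over $\mathbb{Q}$ (here $G$ is torsion-free), so in particular $h_1$ and $h_2$ are independent, with $f(h_1)=-h_1$ and $f(h_2)=-2h_2$. Setting $e=f+2\,\mathrm{id}\colon H\to G$ we then have $e(h_1)=h_1$ and $e(h_2)=0$; on the plane $V=\mathbb{Q}h_1\oplus\mathbb{Q}h_2$ the rational extension of $e$ is exactly the projection onto $\mathbb{Q}h_1$ along $\mathbb{Q}h_2$.

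The idea is to convert $e$ into a nontrivial idempotent endomorphism of a rank-$2$ pure subgroup of $G$, which the hypothesis forbids. Concretely, I would pass to $N=\langle h_1,h_2\rangle_{\ast}^{H}=H\cap V$, the pure closure of $\langle h_1,h_2\rangle$ inside $H$. For $z\in N$ one has $mz=a h_1+b h_2$ for some $m\ge 1$ and integers $a,b$, whence $m\,e(z)=a h_1\in N$, so $e(z)\in\mathbb{Q}h_1$ and has a multiple in $N$. Granting that $e(z)$ actually lies in $N$, one checks that $e$ fixes $N\cap\mathbb{Q}h_1$ pointwise, so $e|_N$ is a genuine idempotent with image $N\cap\mathbb{Q}h_1\ni h_1$ and kernel containing $N\cap\mathbb{Q}h_2\ni h_2$; then $N=e(N)\oplus\ker(e|_N)$ is a proper decomposition of the rank-$2$ pure subgroup $N$ of $G$, contradicting the assumption that all such subgroups are indecomposable. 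This contradiction shows some $g_k$ is injective, completing the argument.

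The hard part will be exactly the step I flagged as ``granting'': that $e(z)\in N$, equivalently that $N=H\cap V$ is pure in $G$ (since $e(z)\in G\cap\mathbb{Q}h_1$ automatically, the issue is only whether an element of $G$ lying in $V$ with a multiple in $N$ must itself lie in $N$). This amounts to a purity property of $H$ along the eigen-plane $V$, which is false for general strictly invariant subgroups (e.g.\ $nG\le\mathbb{Z}\oplus\mathbb{Z}$, where however the ambient group fails our hypothesis). I therefore expect the indecomposability assumption to re-enter precisely here, forbidding $V\cap G$ from being a decomposable rank-$2$ group that still carries the projection $e$; one clean way to organise this is to prove separately that a strictly invariant subgroup of such a $G$ is pure, after which the idempotent argument above applies verbatim to $N$. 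Establishing this purity is the main obstacle and the real content of the proposition.
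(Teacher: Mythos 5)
Your first half is the paper's own argument in lightly disguised form: the paper likewise disposes of injective maps by strict invariance, and it produces the two ``eigenvectors'' more economically (if $f(H)\nleq H$ then $f-\mathrm{id}_H$ cannot be injective, giving $x$ with $f(x)=x$; and one may assume $f$ itself is non-injective, giving $y$ with $f(y)=0$), so that $f$ itself, rather than a translate $e=f+2\,\mathrm{id}$, is rationally the projection. The problem is the second half, and you say so yourself: the step you ``grant'' --- that $e(z)\in N$, equivalently that $N=H\cap V$ is pure in $G$ --- is the entire content of the proof, not a technical detail. Note what your argument actually establishes without it: $e(N)\subseteq G\cap\mathbb{Q}h_1$ and $(\mathrm{id}-e)(N)\subseteq G\cap\mathbb{Q}h_2$, hence $N\subseteq (G\cap\mathbb{Q}h_1)\oplus(G\cap\mathbb{Q}h_2)$ with torsion quotient. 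That contradicts nothing: \emph{every} rank-2 torsion-free group contains a free (hence decomposable) subgroup of rank 2 with torsion quotient, and the standard indecomposable rank-2 groups are constructed exactly as such extensions. Worse, $N$ is pure in $H$ but not (known to be) pure in $G$, so the indecomposability hypothesis does not even apply to $N$; the object to be contradicted must be $L=G\cap V$, and the contradiction requires the splitting to occur at the level of $L$ itself. Your fallback plan --- first prove that strictly invariant subgroups of such a $G$ are pure --- is an unproved conjecture, and your supporting example is wrong anyway: $nG\leq G=\mathbb{Z}\oplus\mathbb{Z}$ is fully invariant but \emph{not} strictly invariant, since $ng\mapsto g$ is a monomorphism $nG\rightarrow G$ whose image is not contained in $nG$.

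For comparison, the paper makes exactly one structural choice that you do not: it forms $L$ as the pure closure of $\langle x,y\rangle$ inside $G$ (not inside $H$), so that $L$ is a genuine rank-2 pure subgroup of $G$ to which the hypothesis applies. From $kz=mx+ny$ it gets $kf(z)=mx$ and $kf^{2}(z)=kf(z)$, and by torsion-freeness it views the (unique) rational extension of $f$ as an idempotent endomorphism of $L$ with rank-one image, whence $L=f(L)\oplus\ker(f|_{L})$ decomposes --- the desired contradiction. The paper is admittedly brisk in asserting that this extension takes values in $L$ rather than merely in $\mathbb{Q}x$, but working with a subgroup that is pure in $G$ is what makes the hypothesis usable at all; your version, which applies the hypothesis to $N=H\cap V$, cannot be completed even in principle without first proving the purity claim you flagged. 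As it stands, the proposal reproduces the paper's reduction and then replaces its decisive step by an acknowledged open problem, so it is not a proof.
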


\begin{proof}
Suppose there exists a non-injective homomorphism $f:H\rightarrow G$. Then
there is a non-zero element $x\in H$ such that $f(x)=x$. Indeed, let $f\in 
\mathrm{Hom}(H,G)$ and $f(H)\nleq H$. If $g$ is the embedding of $H$ in $G$
then $(f-g)H\nleq H$, so there exists a non-zero $x\in H$ such that $f(x)=x$%
. Take $y$ a non-zero element from the kernel of $f$, and let $L$ be the
pure subgroup generated by $x$ and $y$. For every non-zero element $z\in L$,
we have a relation $kz=mx+ny$ with $k\neq 0$. Then $kf(z)=mx$, and so $%
kf^{2}(z)=mx=kf(z)$. From the torsion-free hypothesis, we can view $f$ as an
idempotent endomorphism of $L$ whose image has rank $1$. It follows that $L$
is not indecomposable, a contradiction.~
\end{proof}

\bigskip

Examples of such groups include the purely indecomposable groups determined
by Griffith in the reduced case (see Theorem \textbf{88.5} \cite{fuc2}) and
in particular the so-called cohesive groups considered by Dubois (see
Exercise 17, \S\ \textbf{88}, \cite{fuc2}).

\bigskip

Next, it is easy to see that the only strictly invariant subgroups of rank 1
torsion-free groups are the trivial ones (i.e. $0$ or the whole group). This
is clear for $\mathbb{Z}$ and follows from Corollary \ref{divisible-part}
for $\mathbb{Q}$. By Theorem \ref{str-inv-p}, this also holds for $\mathbb{Z}%
(p)^{\mathbb{N}}$.

\begin{proposition}
\label{comp-dec} A subgroup of a completely decomposable group is strictly
invariant if and only if it is a fully invariant direct summand.
\end{proposition}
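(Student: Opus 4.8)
The plan is to prove the two implications separately, with the forward (``only if'') direction being the substantial one. For the converse, if $H$ is a fully invariant direct summand then it is strictly invariant immediately by Corollary \ref{direct-summands}, so nothing more is needed there. For the ``only if'' direction, suppose $H$ is strictly invariant in a completely decomposable group $G=\bigoplus_{i\in I}R_i$, where each $R_i$ is a rank $1$ torsion-free group. My strategy is to show that $H$ is in fact a direct summand of $G$; once this is established, Corollary \ref{direct-summands} upgrades strict invariance to full invariance and the proof is finished.

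The first key step is to split $H$ along the given decomposition. Since $G$ is torsion-free it has no elements of order $2$, so for each index $i$ I may apply Proposition \ref{w2} to the decomposition $G=R_i\oplus\bigl(\bigoplus_{j\neq i}R_j\bigr)$. This yields $\pi_i(H)\leq H$, where $\pi_i\colon G\to R_i$ is the projection. Since $H\cap R_i\leq\pi_i(H)$ always holds, while here $\pi_i(H)\leq R_i$ and $\pi_i(H)\leq H$ give $\pi_i(H)\leq H\cap R_i$, we obtain $\pi_i(H)=H\cap R_i\leq H$. Because every element of a direct sum has finite support, each $h\in H$ satisfies $h=\sum_i\pi_i(h)$ with $\pi_i(h)\in H\cap R_i$, and therefore $H=\bigoplus_{i\in I}(H\cap R_i)$.

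The second step analyses the pieces $H\cap R_i$. By Proposition \ref{sd}, applied to the same two-summand decomposition, each $H\cap R_i$ is strictly invariant in $R_i$. Invoking the observation recorded just before this proposition, namely that a rank $1$ torsion-free group has no strictly invariant subgroups other than $0$ and itself, I conclude $H\cap R_i\in\{0,R_i\}$ for every $i$. Setting $J=\{i\in I: H\cap R_i=R_i\}$ gives $H=\bigoplus_{i\in J}R_i$, so $G=H\oplus\bigl(\bigoplus_{i\notin J}R_i\bigr)$ and $H$ is a direct summand. Corollary \ref{direct-summands} then forces $H$ to be fully invariant, completing the argument.

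I expect the main obstacle to be the careful bookkeeping needed to pass from the two-summand Proposition \ref{w2} to the arbitrary, possibly infinite, decomposition $\bigoplus_{i\in I}R_i$: one must check that splitting off a single summand $R_i$ against its (infinite) complement is legitimate, and that the resulting local splittings assemble into the global decomposition $H=\bigoplus_i(H\cap R_i)$. The finite-support property of elements of a direct sum is precisely what makes this assembly valid; everything else reduces to the already-established rank $1$ triviality fact together with Corollary \ref{direct-summands}.
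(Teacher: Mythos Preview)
Your proof is correct and follows essentially the same route as the paper's: apply Proposition~\ref{w2} summand-by-summand to obtain $\pi_i(H)\leq H$ and hence $H=\bigoplus_i(H\cap R_i)$, use Proposition~\ref{sd} plus the rank~$1$ triviality observation to force each piece to be $0$ or $R_i$, and then invoke Corollary~\ref{direct-summands}. The only difference is that you spell out the finite-support assembly and the converse direction a bit more explicitly than the paper does.
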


\begin{proof}
Let $G=\oplus _{i\in I}G_{i}$ be a completely decomposable group, where all
groups $G_{i}$ are of rank $1$. If $H$ is a strictly invariant subgroup of $%
G $, then using Proposition \ref{w2} it follows that for all $i\in I$ we
have $\pi _{i}(H)\leq H$, where $\pi _{i}:G\rightarrow G_{i}$ denotes the
projection. Then $H=\oplus _{i\in I}(H\cap G_{i})$, and, using Proposition %
\ref{sd}, $H\cap G_{i}$ is a strictly invariant subgroup of $G_{i}$ for
every $i\in I$. By the preceding paragraph, $H=\oplus _{j\in J}G_{j}$, where 
$J$ is the set of all $j\in I$ such that $H\cap G_{j}=G_{j}$. The conclusion
is now a consequence of Corollary \ref{direct-summands}.
\end{proof}

\begin{corollary}
Let $G$ be a separable torsion-free group and $H$ a nonzero strictly
invariant subgroup. Then $H$ is strongly invariant.
\end{corollary}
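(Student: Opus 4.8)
The plan is to verify the defining property of strong invariance directly: given an arbitrary homomorphism $f\colon H\rightarrow G$ and an arbitrary element $h\in H$, I will show that $f(h)\in H$; since $f$ and $h$ are arbitrary this yields $f(H)\subseteq H$. The idea is to confine the two relevant elements $h$ and $f(h)$ to a manageable piece of $G$ and then invoke the finite-rank case already established. Concretely, since $G$ is separable, the pure subgroup generated by $\{h,f(h)\}$---which has rank at most $2$---is contained in a completely decomposable direct summand of $G$; because $h$ and $f(h)$ involve only finitely many of the rank-$1$ constituents, I may take this summand $C$ to have finite rank, and write $G=C\oplus C'$ with projection $\pi_C\colon G\rightarrow C$.

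Next I would feed this decomposition into Proposition \ref{w2}. Since $G$ is torsion-free, its additive group has no elements of order $2$, so the hypothesis of Proposition \ref{w2} is satisfied for $G=C\oplus C'$. Hence $H=K\oplus L$ with $K=\pi_C(H)\leq C$ and $L=\pi_{C'}(H)\leq C'$, and $K$ is strictly invariant in $C$. Observe that $h\in H\cap C=K$, since $h\in C$ forces $\pi_{C'}(h)=0$. As $C$ is torsion-free of finite rank, $K$ is a torsion-free strictly invariant subgroup of $C$ of finite rank, so by the proposition asserting that finite-rank torsion-free strictly invariant subgroups are strongly invariant, $K$ is strongly invariant in $C$.

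Finally I would exploit the strong invariance of $K$ in $C$ by forming the composite $g=\pi_C\circ (f|_K)\colon K\rightarrow C$. Strong invariance gives $g(K)\subseteq K$; in particular $g(h)=\pi_C(f(h))\in K$. But $f(h)\in C$ by our choice of $C$, so $\pi_C(f(h))=f(h)$, whence $f(h)\in K\subseteq H$, as desired.

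The main obstacle is the reduction step rather than the final computation: one must ensure that separability can be arranged to produce a completely decomposable direct summand of \emph{finite} rank containing both $h$ and $f(h)$ (so that the finite-rank proposition applies), and one must read off from Proposition \ref{w2} both that $K=\pi_C(H)$ is contained in $H$ and that it is strictly invariant in $C$. Once the problem has been localized to the finite-rank completely decomposable summand $C$, the previously proved results carry out the remaining work.
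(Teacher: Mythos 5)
Your proof is correct, and its skeleton matches the paper's: use separability to trap $h$ and $f(h)$ in a finite-rank completely decomposable direct summand $C$, apply Proposition \ref{w2} (legitimately, since $G$ is torsion-free and hence has no elements of order $2$) to split $H$ along $G=C\oplus C'$ and conclude that $K=H\cap C$ is strictly invariant in $C$, then finish by projecting $f|_K$ into $C$. The one genuine divergence is the lemma you invoke to upgrade $K$ from strictly to strongly invariant in $C$: you cite the earlier proposition that a finite-rank torsion-free strictly invariant subgroup is strongly invariant, whereas the paper cites Proposition \ref{comp-dec}, by which $K$ is a fully invariant direct summand of the completely decomposable group $C$, hence strongly invariant. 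Your route uses only that $C$ is torsion-free of finite rank, not that it is completely decomposable, so your argument would work verbatim in any torsion-free group in which every finite subset lies in a finite-rank direct summand; the price is reliance on the linear-independence counting argument behind the finite-rank proposition, instead of the structural description of $K$ (as a subsum of the rank-one summands) that Proposition \ref{comp-dec} provides. A side benefit of your write-up is that it makes explicit the concluding step --- composing $f|_K$ with $\pi_C$ and using $f(h)\in C$ to get $\pi_C(f(h))=f(h)\in K\subseteq H$ --- which the paper compresses into ``whence $f(x)\in H$''.
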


\begin{proof}
Let $f:H\longrightarrow G$ be a homomorphism. If $x\in H$ then there exists
a finite rank completely decomposable $G_{1}\oplus \dots \oplus G_{n}$
direct summand of $G$ such that $x,f(x)\in G_{1}\oplus \dots \oplus G_{n}$.
Using Proposition \ref{w2} it follows that $K=H\cap (G_{1}\oplus \dots
\oplus G_{n})$ is a strictly invariant subgroup of $G_{1}\oplus \dots \oplus
G_{n}$. Then by Proposition \ref{comp-dec}, $K$ is strongly invariant,
whence $f(x)\in H$.
\end{proof}

\bigskip

Finally, we show that the \emph{groups, all whose subgroups are strictly
invariant}, coincide with those all whose subgroups are strongly invariant.

\begin{theorem}
\label{all}The only groups in which every subgroup is strictly invariant are
the direct sums of cocyclic groups, at most one, for each prime number.
\end{theorem}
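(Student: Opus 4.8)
The statement is an equivalence, so the plan is to treat the two implications separately, with the bulk of the conceptual work going into showing that the stated structure is \emph{necessary}.

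\emph{Sufficiency.} Suppose $G=\bigoplus_{p}G_{p}$, where each nonzero primary component $G_{p}$ is cocyclic, i.e. $G_{p}\cong \mathbb{Z}(p^{n_{p}})$ with $1\le n_{p}\le \infty$. I would prove the stronger statement that every subgroup is in fact \emph{strongly} invariant (whence strictly invariant). Since $G$ is torsion, an arbitrary subgroup decomposes as $H=\bigoplus_{p}(H\cap G_{p})$, and the subgroup lattice of a cocyclic $p$-group is the chain $0\le G_{p}[p]\le G_{p}[p^{2}]\le \cdots$; hence each $H\cap G_{p}$ equals some $G_{p}[p^{k_{p}}]$ or all of $G_{p}$, in any case a fully invariant subgroup of $G_{p}$. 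Given $f\colon H\to G$ and $x\in H\cap G_{p}$ with $p^{k}x=0$, the image satisfies $p^{k}f(x)=0$, so $f(x)\in G[p^{k}]=\bigoplus_{q}G_{q}[p^{k}]$; since multiplication by $p^{k}$ is invertible on $G_{q}$ for $q\ne p$, this forces $f(x)\in G_{p}[p^{k}]\le H$. Thus $f(H)\le H$.

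\emph{Necessity.} Assume every subgroup of $G$ is strictly invariant. First I would show $G$ is torsion. If $x\in G$ had infinite order, then $2x\mapsto x$ defines a monomorphism $\langle 2x\rangle \to G$ whose image $\langle x\rangle$ is not contained in $\langle 2x\rangle$ (this is the mechanism behind the example showing that $p\mathbb{Z}$ is not strictly invariant in $\mathbb{Z}$), contradicting strict invariance of $\langle 2x\rangle$. Hence $G$ is torsion and $G=\bigoplus_{p}G_{p}$. Next, strict invariance passes to primary components: any monomorphism $H\to G_{p}$ with $H\le G_{p}$ is in particular a monomorphism $H\to G$, so every subgroup of $G_{p}$ is strictly invariant in $G_{p}$.

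It then remains to prove each nonzero $G_{p}$ is cocyclic, equivalently $r_{p}(G_{p})\le 1$. Suppose instead $G_{p}[p]$ contained two independent elements $a,b$ of order $p$. Then $a\mapsto b$ defines a monomorphism $\langle a\rangle\to G_{p}$ whose image $\langle b\rangle$ is not contained in $\langle a\rangle$, so $\langle a\rangle$ would fail to be strictly invariant, a contradiction. Therefore $|G_{p}[p]|=p$, i.e. $G_{p}$ has a unique minimal subgroup, and by the classification of $p$-groups of $p$-rank one \cite{fuc1} we obtain $G_{p}\cong \mathbb{Z}(p^{n})$ for some $1\le n\le\infty$. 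Assembling over all primes yields the asserted form. The proof is short precisely because each obstruction to the structure is witnessed by a single small cyclic subgroup; the only genuinely external ingredient is the classification of $p$-groups with socle of order $p$ as the cocyclic groups. The step that most repays care is the sufficiency direction, where one must verify strong invariance uniformly across all components and correctly include the divisible parts $\mathbb{Z}(p^{\infty})$ (where $H\cap G_{p}$ may be the whole component); I expect that, rather than the necessity direction, to be where a careless write-up could slip.
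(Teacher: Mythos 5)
Your proof is correct, but it follows a genuinely different and more self-contained route than the paper. The paper's proof is essentially a deferral: it asserts that the argument in the strongly invariant case (the reference \cite{cal}) ``holds verbatim'' for torsion groups, and then rules out torsion-free and genuinely mixed groups separately --- torsion-free groups via the fact that rank~$1$ groups are strictly invariant simple (multiplication by $\frac{1}{p}$), and mixed groups by invoking from \cite{cal} that the torsion part must split off as a direct summand, after which one reduces to the torsion-free case. You replace all of this with direct arguments: your single observation that an element $x$ of infinite order makes $\langle 2x\rangle$ non-strictly-invariant (via $2x\mapsto x$) collapses the torsion-free and mixed cases into one step and avoids any appeal to the splitting of $T(G)$; your socle-rank argument ($a\mapsto b$ for independent elements of order $p$) plus the classification of $p$-groups with socle of order $p$ as the cocyclic groups handles the primary components from scratch; and you prove sufficiency explicitly (indeed establishing strong invariance), which the paper leaves implicit in the citation. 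What the paper's approach buys is brevity by leaning on prior work; what yours buys is a proof readable without \cite{cal}, whose only external ingredient is the standard Fuchs classification. One small point to tighten in the write-up: in the sufficiency direction you should take $p^{k}$ to be the \emph{exact} order of $x$, since for larger $k$ the containment $G_{p}[p^{k}]\le H$ you assert can fail (e.g. $H\cap G_{p}=G_{p}[p]$ and $k=2$); with $k$ the exact order exponent one has $G_{p}[p^{k}]=\langle x\rangle\le H$ because the subgroups of a cocyclic group form a chain, and the argument goes through as you intend.
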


\begin{proof}
The proof in \cite{cal} holds verbatim with obvious changes for torsion
groups. Indeed, a subgroup $H$ of a torsion group $G$ is strictly invariant
if and only if the $p$-component $H_{p}$ is strictly invariant in $G_{p}$
for each prime $p$, in a $p$-group $G$ every subgroup is strictly invariant
if and only if $G$ is cocyclic, and, in a torsion group every subgroup is
strictly invariant if and only if each $p$-component has this property.

Further, there are no torsion-free nor (genuine) mixed groups with only
strictly invariant subgroups. Indeed, using the multiplication with $\frac{1%
}{p}$ for a suitable prime $p$, it is easy to see that rank 1 torsion-free
groups are strictly invariant simple (i.e. have only trivial strictly
invariant subgroups). Therefore, no torsion-free groups have only strictly
invariant subgroups.

As for (genuine) mixed groups $G$, the torsion part $T(G)$ must be as in the
strongly invariant case and so is a direct summand (see \cite{cal}). If $%
G=T(G)\oplus F$, we continue with $F$ as above.
\end{proof}

\bigskip

\noindent\textbf{Acknowledgement.}
Thanks are due to the referee, for corrections and suggestions which
improved our presentation.

\bigskip

\bigskip

S. Breaz, Department of Mathematics, Babe\c{s}-Bolyai University, 1 Kog\u{a}%
lni-ceanu Street, Cluj-Napoca, Romania

E-mail: bodo@math.ubbcluj.ro

G. C\u{a}lug\u{a}reanu, Department of Mathematics, Babe\c{s}-Bolyai
University, 1 Kog\u{a}lniceanu Street, Cluj-Napoca, Romania

E-mail: calu@math.ubbcluj.ro

A. Chekhlov, Faculty of Mechanics and Mathematics, Tomsk State University,
Tomsk, Russia

E-mail: cheklov@math.tsu.ru

\end{document}